\numberwithin{equation}{section}
\newtheorem{thm}{Theorem}[section]
\newtheorem{prop}[thm]{Proposition}
\newtheorem{lemma}[thm]{Lemma}
\theoremstyle{definition}
\newtheorem{definition}[thm]{Definition}
\newtheorem*{example}{Example}
\newcommand{\Z}{\mathbb{Z}}
\newcommand{\N}{\mathbb{N}}
\newcommand{\rt}{\sqrt}
\newcommand{\bd}{\begin{description}}
\newcommand{\ed}{\end{description}}
\newcommand{\ds}{\displaystyle}
\newcommand{\Sym}{\operatorname{Sym}}
\newcommand{\Alt}{\operatorname{Alt}}
\newcommand{\SL}{\operatorname{SL}}
\title[Congruences between the alternating and symmetric groups]{Congruences between word length statistics for the finitary alternating and symmetric groups}
\date{\today}
\subjclass[2010]{11P82, 11P83}
\keywords{Partitions, finitary permutation groups, Ramanujan congruences}
\author{Tessa Cotron}
\address{605 Asbury Circle, Box 122042, Atlanta, GA 30322}
\email{tessa.cotron@emory.edu}
\author{Robert Dicks}
\address{605 Asbury Circle, Atlanta, GA 30322}
\email{rdicks@emory.edu}
\author{Sarah Fleming}
\address{1192 Paresky Center, Williams College, Williamstown, MA 01267}
\email{smf1@williams.edu}
\begin{document}
\begin{abstract}
In \cite{bacher}, Bacher and de la Harpe study conjugacy growth series of infinite permutation groups and their relationships with $p(n)$, the partition function, and $p(n)_{\textbf{e}}$, a generalized partition function. They prove identities for the conjugacy growth series of the finitary symmetric group and the finitary alternating group. The group theory in~\cite{bacher} also motivates an investigation into congruence relationships between the finitary symmetric group and the finitary alternating group. Using the Ramanujan congruences for the partition function $p(n)$ and Atkin's generalization to the $k$-colored partition function $p_{k}(n)$, we prove the existence of congruence relations between these two series modulo arbitrary powers of 5 and 7, which we systematically describe.  Furthermore, we prove that such relationships exist modulo powers of all primes $\ell\geq 5$.
\end{abstract}
\maketitle

\section{Introduction}

In a recent paper~\cite{bacher}, Bacher and de la Harpe study the conjugacy growth series of infinite permutation groups that are locally finite. Given $g \in G$, where $G$ is a group generated by some set $S$, define the \textit{word length} $\ell_{G,S}(g)$ as the minimal non-negative integer $n$ where $g=s_1s_2\cdots s_n$ and $s_1,s_2,\ldots,s_n\in S\cup S^{-1}$. They define the \textit{conjugacy length} $\kappa_{G,S}(g)$ as the minimal integer $n$ such that there exists an $h$ in the conjugacy class of $g$ for which $\ell_{G,S}(h)=n$. The number of conjugacy classes in $G$ consisting of elements $g$ where $\kappa_{G,S}(g)=n$ for $n\in\N$ is denoted $\gamma_{G,S}(n)\in\N\cup\{0\}\cup\{\infty\}$. If $\gamma_{G,S}(n)$ is finite for all $n\in\N$ for a given pair $(G, S)$, then the \textit{conjugacy growth series} is defined to be
\begin{equation}
C_{G,S}(q):=\sum_{n=0}^{\infty}\gamma_{G,S}(n)q^n.
\end{equation}

Given a permutation $g$ of a non-empty set $X$, denote the \textit{support} of $g$ as $\sup(g):=\{x \in X : g(x)\neq x\}$. The group of permutations of $X$ with finite support is the \textit{finitary symmetric group} $\Sym(X)$. The subgroup of $\Sym(X)$ with permutations of even signature is the \textit{finitary alternating group} $\Alt(X)$.  Let $S\subset\Sym(\N)$ be a generating set of $\Sym(\N)$ such that $S_{\N}^{\text{Cox}}\subset S\subset T_{\N}$, where
\begin{equation}\label{cox}
S_{\N}^{\text{Cox}}:=\{(i,i+1):i\in\N\}
\end{equation}  
is such that $(\Sym(\N), S_{\N}^{\text{Cox}})$ is a Coxeter system, and 
\begin{equation}\label{TN}
T_{\N}:=\{(x,y)\in\Sym(\N): x,y\in\N\text{ are distinct}\}
\end{equation} 
is the conjugacy class of all transpositions in $\Sym(\N)$.  Throughout this paper, we define $S$ to be the set described above.  By Proposition 1 in~\cite{bacher}, the conjugacy growth series for the pair $(\Sym(\N), S)$ is given by 
\begin{equation}\label{csym}
C_{\Sym(\N),S}(q)=\sum_{n=0}^{\infty}p(n)q^n=\prod_{n=1}^{\infty}\frac{1}{1-q^n},
\end{equation}
where $p(n)$ denotes the usual integer partition function.  Let $S'\subset\Alt(\N)$ be a generating set of $\Alt(\N)$ such that $S_{\N}^{A}\subset S'\subset T_{\N}^{A}$, where we define
\begin{equation}\label{SNA}
S_{\N}^A:=\{(i, i+1, i+2)\in\Alt(\N) : i\in N\}
\end{equation}
and 
\begin{equation}\label{TNA}
T_{\N}^A:=\cup_{g\in\Alt(\N)}g S_{\N}^A g^{-1}.
\end{equation}
Throughout this paper, we define $S'$ to be the set described above.  By Proposition 11 in~\cite{bacher}, the conjugacy growth series for the pair ($\Alt(\N), S')$ is given by
\begin{align}\label{calt1}
C_{\Alt(\N),S'}(q)&=\frac{1}{2}\sum_{n=0}^{\infty}p\left(\frac{n}{2}\right)q^n+\frac{1}{2}\sum_{n=0}^{\infty}p_{2}(n)q^n\\
\nonumber&=\frac{1}{2}\prod_{n=1}^{\infty}\frac{1}{1-q^{2n}}+\frac{1}{2}\prod_{n=1}^{\infty}\frac{1}{(1-q^n)^2},
\end{align}
where $p\left(\frac{n}{2}\right)=0$ for all odd $n$ and $p_2(n)$ denotes the number of 2-colored partitions of $n$.  Combining~(\ref{csym}) and~(\ref{calt1}), we obtain
\begin{align}\label{calt2}
2\gamma_{\Alt(\N),S'}(2n)&=p(n)+p_2(2n)\\
\nonumber&=\gamma_{\Sym(\N),S}(n)+p_2(2n).
\end{align}

The finite symmetric and alternating groups, $S_n$ and $A_n$, have the property that $A_n$ is a normal subgroup of $S_n$ and $[S_n:A_n]=2$, so one would naively expect a similar relationship between the finitary symmetric and alternating groups to hold.  By~(\ref{calt2}), we see that $\gamma_{\Alt(\N),S'}(2n)$ is one half of $\gamma_{\Sym(\N),S}(n)$ together with a ``discrepancy function,'' $p_2(2n)$.  In terms of size, we prove in~\cite{cotron} that $\gamma_{\Alt(\N),S'}(n)$ and $\gamma_{\Sym(\N),S}(n)$ behave differently asymptotically and do not follow the pattern of their finite counterparts.  More precisely, in~\cite{cotron}, the authors prove that as $n\rightarrow\infty$, we have that
\begin{equation}
\gamma_{\Sym(\N),S}(n)\sim\frac{e^{\pi\rt{2n/3}}}{4n\rt{3}}
\end{equation}
and
\begin{equation}
\gamma_{\Alt(\N),S'}(n)\sim \frac{e^{2\pi\rt{n/3}}}{3^{\frac{3}{4}}\cdot 8n^{\frac{5}{4}}}.
\end{equation}

It is natural to consider the question of congruence relations between the coefficients of the conjugacy growth series of each of these groups.  By~(\ref{calt2}), there will exist congruences modulo powers of primes between $2\gamma_{\Alt(\N),S'}(2n)$ and $\gamma_{\Sym(\N),S}(n)$  whenever the ``discrepancy function," $p_2(2n)$, is equivalent to 0.  In~\cite{cotron}, we establish a method of proving congruences for generalized partition functions modulo a prime, including the following examples.

\begin{example}
For all $n\equiv 2, 3, 4\pmod{5}$, we have that
\begin{equation*}
2\gamma_{\Alt(\N),S'}(2n)\equiv\gamma_{\Sym(\N),S}(n)\pmod{5}.
\end{equation*}
\end{example}

\begin{example}
For all $n\equiv 17, 31, 38, 45\pmod{49}$, we have that
\begin{equation*}
2\gamma_{\Alt(\N),S'}(2n)\equiv\gamma_{\Sym(\N),S}(n)\pmod{7}.
\end{equation*}
\end{example}

It is natural to ask if these examples are part of a more general phenomenon and if there exists a method to determine congruences.  Ramanujan stated congruences for the partition function $p(n)$ modulo powers of 5, 7, and 11, proved by Watson in~\cite{watson}.  In addition, Atkin proved the existence of congruences for the function $p_2(n)$ modulo powers of the primes 5, 7, and 13 in~\cite{atkin}.  Building off of these results, we obtain congruences between the coefficients of the conjugacy growth series for these groups modulo powers of 5 and 7.  

Throughout, we let $S\subset\Sym(\N)$ be a generating set of $\Sym(\N)$ such that $S_{\N}^{\text{Cox}}\subset S\subset T_{\N}$, where $S_{\N}^{\text{Cox}}$ and $T_{\N}$ are defined by~(\ref{cox}) and~(\ref{TN}), respectively.  In addition, we let $S'\subset\Alt(\N)$ be a generating set for $\Alt(\N)$ such that $S_{\N}^{A}\subset S'\subset T_{\N}^{A}$, where $S_{\N}^{A}$ and $T_{\N}^{A}$ are defined by~(\ref{SNA}) and~(\ref{TNA}), respectively.  Using this notation, we arrive at the following theorem:

\begin{thm}\label{cong1}
Assume the notation above.  Let $\ell=5$ or $7$ and let $j\geq 1$.  Then for all $24n\equiv 1\pmod{\ell^j}$, we have that
\begin{equation*}
\gamma_{\Alt(\N),S'}(2n)\equiv\gamma_{\Sym(\N),S}(n)\equiv 0\pmod{\ell^{\lfloor{j/2-1}\rfloor}}.
\end{equation*}
\end{thm}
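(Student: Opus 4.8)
The plan is to reduce the theorem to two known families of congruences — Watson's form of the Ramanujan congruences for $p(n)$ and Atkin's congruences for the $2$-colored partition function $p_2(n)$ — and then to combine them. First I would record the two identities supplied by the excerpt: from~(\ref{csym}) we have $\gamma_{\Sym(\N),S}(n)=p(n)$, while~(\ref{calt2}) gives
\begin{equation*}
2\gamma_{\Alt(\N),S'}(2n)=p(n)+p_2(2n).
\end{equation*}
Since $2$ is a unit modulo every power of $\ell\in\{5,7\}$, it suffices to bound the $\ell$-adic valuations of $p(n)$ and of $p_2(2n)$ separately: the congruence for $\gamma_{\Sym(\N),S}(n)$ is then immediate, and the one for $\gamma_{\Alt(\N),S'}(2n)$ follows by multiplying the relation above by $2^{-1}$.

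Next I would translate the hypothesis $24n\equiv 1\pmod{\ell^j}$ into the input required by each congruence. For $p(n)$ the hypothesis is already in Ramanujan--Watson form, so Watson's theorem~\cite{watson} yields $p(n)\equiv 0\pmod{5^{j}}$ when $\ell=5$ and $p(n)\equiv 0\pmod{7^{\lfloor j/2\rfloor+1}}$ when $\ell=7$; in both cases this comfortably exceeds $\ell^{\lfloor j/2-1\rfloor}$, settling the $\gamma_{\Sym(\N),S}(n)$ half. For $p_2(2n)$ I would use the fact that $\sum p_2(n)q^{n}=\prod_{m\geq 1}(1-q^m)^{-2}$ is naturally indexed by $24n-2$, coming from $\eta(\tau)^{-2}=\sum_{n}p_2(n)q^{(24n-2)/24}$. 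Multiplying the hypothesis by $2$ gives $24(2n)\equiv 2\pmod{\ell^j}$, which places $2n$ in exactly the residue class where Atkin's congruences~\cite{atkin} apply; feeding this in produces $p_2(2n)\equiv 0\pmod{\ell^{a_\ell(j)}}$ for an explicit exponent $a_\ell(j)$.

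Finally I would take the minimum of the two exponents and verify the floor arithmetic. Writing $\lfloor j/2-1\rfloor=\lfloor j/2\rfloor-1$, the binding constraint comes from the $p_2$ term rather than from $p(n)$, and I expect Atkin's bound to reduce to exactly $\ell^{\lfloor j/2\rfloor-1}$ in both cases (Atkin's exponents for the colored partition function grow like $j/2$ rather than like $j$, on account of the half-integral weight of $\eta^{-2}$). With both $p(n)\equiv 0$ and $p_2(2n)\equiv 0\pmod{\ell^{\lfloor j/2-1\rfloor}}$, their sum is too, so $2\gamma_{\Alt(\N),S'}(2n)\equiv 0$ and hence $\gamma_{\Alt(\N),S'}(2n)\equiv\gamma_{\Sym(\N),S}(n)\equiv 0\pmod{\ell^{\lfloor j/2-1\rfloor}}$.

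The hard part will be the bookkeeping around Atkin's congruences for $p_2$. One must cite the precise dependence of $a_\ell(j)$ on $j$, confirm that the index shift from $24n\equiv 1$ to $24(2n)\equiv 2\pmod{\ell^j}$ lands in the exact arithmetic progression Atkin's result governs, and check that the resulting minimum is genuinely $\lfloor j/2-1\rfloor$ rather than a larger exponent. By contrast, the $p(n)$ contribution is comparatively routine once Watson's theorem is invoked, since it is uniformly stronger than what the final statement demands.
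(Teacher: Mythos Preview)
Your proposal is correct and follows essentially the same route as the paper's proof: invoke Watson's Ramanujan congruences to bound the $\ell$-adic valuation of $p(n)=\gamma_{\Sym(\N),S}(n)$, multiply the hypothesis by $2$ to obtain $24(2n)\equiv 2\pmod{\ell^j}$ and feed this into Atkin's congruences for $p_2$ (with $k=2$, $\alpha=1$, $\epsilon=-1$, giving exactly $\ell^{\lfloor j/2-1\rfloor}$), and then combine via~(\ref{calt2}). The paper carries out the Atkin bookkeeping you flag as the ``hard part'' in its preliminaries, arriving at precisely the exponent $\lfloor j/2-1\rfloor$ you anticipated.
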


\begin{example}
For example, modulo $5$, $25$, and $125$, we obtain for all $n\geq0$ that
\begin{align*}
\gamma_{\Alt(\N),S'}(2\cdot 5^4n+1198)&\equiv
0\pmod{5}\\
\gamma_{\Alt(\N),S'}(2\cdot 5^6n+29948)&\equiv
0\pmod{25}\\
\gamma_{\Alt(\N),S'}(2\cdot 5^8n+748698)&\equiv
0\pmod{125}.
\end{align*}
Likewise, modulo $7$, $49$, and $343$, we obtain for all $n\geq 0$ that
\begin{align*}
\gamma_{\Alt(\N),S'}(2\cdot 7^4n+4602)&\equiv
0\pmod{7}\\
\gamma_{\Alt(\N),S'}(2\cdot 7^6n+225494)&\equiv
0\pmod{49}\\
\gamma_{\Alt(\N),S'}(2\cdot 7^8n+11049202)&\equiv
0\pmod{343}.
\end{align*}
\end{example}

It is a natural question to ask what holds for general primes $\ell\not\in\{5,7\}$.  
Following the work of Treneer~\cite{treneer}, we prove the existence of congruences between the coefficients of the conjugacy growth series for $(\Alt(\N),S')$ and $(\Sym(\N),S)$ modulo arbitrary powers of primes $\ell\geq 5$.  Treneer's work gives general congruences for coefficients of various types of modular forms.  Here, we follow her method and make it explicit.

Let $\ell\geq 5$ be prime.  We then define
\begin{equation}\label{m_l}
m_{\ell}:=
\begin{cases}
2 & 5\leq\ell\leq 23\\
1 & \ell\geq 29,
\end{cases}
\end{equation}
\begin{equation}\label{delta_l}
\delta_{\ell}:=
\frac{Q\ell^{m_{\ell}}\beta_{\ell}+1}{24},
\end{equation}
and
\begin{equation}\label{beta_l}
\beta_{\ell}:=
\frac{23}{Q\ell^{m_\ell}}\pmod{24}.
\end{equation}

Using this notation, we arrive at the following theorem:

\begin{thm}\label{sym}
Assume the above notation.  Let $\ell\geq 5$ be prime and let $j\geq 1$.  Then for a positive proportion of primes $Q\equiv -1\pmod{144\ell^j}$, we have that
\begin{equation*}
2\gamma_{\Alt(\N),S'}(2Q\ell^{m_{\ell}}n+2\delta_\ell)\equiv\gamma_{\Sym(\N),S}(Q\ell^{m_{\ell}}n+\delta_\ell)\pmod{\ell^j}
\end{equation*}
for all $24n+\beta_{\ell}$ coprime to $Q\ell$.
\end{thm}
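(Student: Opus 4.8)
The plan is to collapse the whole statement to a single divisibility for the $2$-colored partition function and then run Treneer's method explicitly. By the exact identity~(\ref{calt2}), writing $M:=Q\ell^{m_{\ell}}n+\delta_{\ell}$ we have
\[
2\gamma_{\Alt(\N),S'}(2M)-\gamma_{\Sym(\N),S}(M)=p_2(2M),
\]
so the asserted congruence is equivalent to $p_2(2M)\equiv 0\pmod{\ell^j}$. I would realize $p_2$ modularly via the weakly holomorphic modular form $f(\tau):=\eta(24\tau)^{-2}=\sum_{m\geq 0}p_2(m)q^{24m-2}$, which has weight $-1$ on $\Gamma_0(576)$ with a character. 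Using~(\ref{delta_l}) in the form $24\delta_{\ell}-1=Q\ell^{m_{\ell}}\beta_{\ell}$ together with~(\ref{beta_l}), a direct computation gives $48M-2=2Q\ell^{m_{\ell}}(24n+\beta_{\ell})$, so the coefficient of $q^{N}$ in $f$ with $N=2Q\ell^{m_{\ell}}(24n+\beta_{\ell})$ is exactly $p_2(2M)$. The goal thus becomes the vanishing, modulo $\ell^j$, of the coefficients of $f$ at indices $N$ divisible by exactly $\ell^{m_{\ell}}$ and by $Q$, with cofactor coprime to $Q\ell$.

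Since $f$ has negative weight, I would first multiply by a power of the Eisenstein series $E_{\ell-1}$ satisfying $E_{\ell-1}^{\ell^{j-1}}\equiv 1\pmod{\ell^j}$, obtaining a weakly holomorphic form of large positive weight whose coefficients agree with those of $f$ modulo $\ell^j$. I would then apply the operator $U_{\ell}\colon\sum a(n)q^n\mapsto\sum a(\ell n)q^n$ exactly $m_{\ell}$ times. Following Treneer, the number of applications required to kill the principal parts at every cusp modulo $\ell^j$ is governed by the pole orders of $f$ at the cusps of the relevant level, and this is exactly where the dichotomy~(\ref{m_l}) originates: one application suffices when $\ell\geq 29$, whereas two are needed when $5\leq\ell\leq 23$. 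After these applications one is left with a genuine holomorphic cusp form $g$ of positive integral weight, with character and $\ell$-integral coefficients, satisfying
\[
g\equiv\sum_{\gcd(r,\ell)=1}a(\ell^{m_{\ell}}r)\,q^{r}\pmod{\ell^j}.
\]
Because $24n+\beta_{\ell}$ is coprime to $\ell$, the index $r=2Q(24n+\beta_{\ell})$ is coprime to $\ell$, so $p_2(2M)=a(N)$ is recovered as the $r$-th coefficient $b(r)$ of $g$ modulo $\ell^j$.

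To force the divisibility I would invoke Serre's theorem on the Hecke action modulo $\ell^j$: for a cusp form $g$ of integral weight $k$ with $\ell$-integral coefficients, a positive proportion of primes $Q\equiv -1\pmod{144\ell^j}$ satisfy $g\mid T_Q\equiv 0\pmod{\ell^j}$, that is
\[
b(Qs)+\chi(Q)Q^{k-1}b(s/Q)\equiv 0\pmod{\ell^j}\qquad\text{for all }s,
\]
where the modulus $144\ell^j$ is dictated by the conductor of the character together with the prime-to-$\ell$ level. Writing $r=Qs$ with $s=2(24n+\beta_{\ell})$, the hypothesis that $24n+\beta_{\ell}$ is coprime to $Q$ gives $Q\nmid s$, hence $b(s/Q)=0$ and $b(r)=b(Qs)\equiv 0\pmod{\ell^j}$. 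Unwinding the identifications yields $p_2(2M)\equiv 0\pmod{\ell^j}$, which by the first paragraph is the claimed congruence; the positive-proportion conclusion is inherited directly from Serre's theorem.

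The main obstacle is the cusp analysis behind the second paragraph. One must bound the order of $\eta(24\tau)^{-2}$, after the weight-raising, at every cusp of the relevant level and show that $U_{\ell}^{m_{\ell}}$ annihilates each principal part modulo $\ell^j$; making Treneer's existence statement explicit here is precisely what pins down $m_{\ell}$ and the threshold $\ell=29$ in~(\ref{m_l}). A secondary but necessary point is to track the level and character through the weight-raising and the iterated $U_{\ell}$, so that Serre's theorem applies with exactly the modulus $144\ell^j$ appearing in the statement.
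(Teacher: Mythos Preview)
Your overall strategy matches the paper's: reduce to $p_2(2M)\equiv 0\pmod{\ell^j}$ via~(\ref{calt2}), realize $p_2$ through an eta-quotient, make Treneer's method explicit to produce a cusp form capturing $f\mid U_{\ell^{m_\ell}}$ modulo $\ell^j$, and finish with Serre's theorem (Theorem~\ref{serre}). Your arithmetic rewriting $48M-2=2Q\ell^{m_\ell}(24n+\beta_\ell)$ is correct, and the Hecke endgame is identical to the paper's. Using $\eta(24\tau)^{-2}$ rather than the paper's $\eta(12z)^{-2}$ on $\Gamma_0(144)$ only shifts the level and is harmless for the stated conclusion, since primes in the finer progression are a fortiori $\equiv -1\pmod{144\ell^j}$.

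There is, however, a genuine gap in your cusp-form construction. You raise weight with $E_{\ell-1}^{\ell^{j-1}}$, but $E_{\ell-1}$ is nowhere vanishing on $\mathbb{H}$ and is level $1$, so multiplying by it does nothing to the principal parts at any cusp. Applying $U_\ell^{m_\ell}$ then controls the poles only at cusps $a/(c\ell^2)$ (this is exactly the content of Propositions~\ref{expans} and~\ref{vanish}, and is where the threshold~(\ref{m_l}) comes from); at cusps $a/c$ with $\ell^2\nmid c$ the poles of your form persist, so your $g$ is not holomorphic and Theorem~\ref{serre} does not apply. The paper, following Treneer, repairs this by replacing $E_{\ell-1}$ with the eta-quotient $F_\ell(z)=\eta(z)^{\ell^2}/\eta(\ell^2 z)$, which still satisfies $F_\ell^{\ell^{\beta}}\equiv 1\pmod{\ell^{\beta+1}}$ but, crucially, \emph{vanishes} at every cusp $a/c$ with $\ell^2\nmid c$; a sufficiently high power of $F_\ell$ therefore kills those poles outright. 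The paper also works with $f_{m_\ell}(z)=f\mid U_{\ell^{m_\ell}}-f\mid U_{\ell^{m_\ell+1}}\mid V_\ell$ rather than $f\mid U_{\ell^{m_\ell}}$ alone: the subtraction cancels the constant term $a_0(0)$ at the cusps $a/(c\ell^2)$ (see the end of the proof of Proposition~\ref{vanish}) and is precisely what yields the sieve $\gcd(r,\ell)=1$ in your displayed formula for $g$, which you asserted without justification. With $E_{\ell-1}$ swapped for a power of $F_\ell$ and the subtraction step included, your outline becomes exactly the paper's proof.
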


In this paper, we make effective the effort of~\cite{treneer} by focusing on the specific case of the function $p_2(n)$, which arises from the conjugacy growth series for $(\Alt(\N),S')$.  We use properties of modular forms to prove the existence of congruences between the coefficients of these two series.

Section~\ref{modularity} covers the basics of modular forms and cusps, and Section~\ref{atkinsthm} focuses on congruences for the partition function $p(n)$ and the $k$-colored partition function $p_k(n)$ modulo prime powers.  In Section~\ref{serresthm} we state a theorem of Serre.  Section~\ref{pf} provides a proof of Theorem~\ref{cong1} using the Ramanujan and Atkin congruences.  In Section~\ref{proof of thm}, we focus on and make effective a special case of Theorem 1.2 in~\cite{treneer}, which we require in order to prove the existence of congruences between $2\gamma_{\Alt(\N),S'}(2n)$ and $\gamma_{\Sym(\N),S}(n)$.  In Section~\ref{section4}, we use this result to prove Theorem~\ref{sym}.

\section*{Acknowledgments}
The authors would like to thank Ken Ono and Olivia Beckwith for advising this project and for their many helpful conversations and suggestions. In addition, the authors would like to thank Emory University and the NSF for their support via grant DMS-1250467.

\section{Preliminaries}\label{prelim}

Here we state standard properties of modular forms that can be found in many texts such as~\cite{apostol} and~\cite{ono}.  

\subsection{Modularity, Cusps, and Operators}\label{modularity}

The proof of Theorem~\ref{sym} uses properties of modular forms, which requires the understanding of cusps. A \textit{cusp} of $\Gamma\subseteq\SL_2(\Z)$ is an equivalence class in $\mathbb{P}^1(\mathbb{Q})=\mathbb{Q}\cup\{\infty\}$ under the action of $\Gamma$~\cite[p. 2]{ono}.  We use the following definition of modular forms from~\cite[p. 114]{apostol}: 
\begin{definition}
A function $f$ is said to be an \textit{entire modular form of weight k} on a subgroup $\Gamma\subseteq\SL_{2}(\Z)$ if it satisfies the following conditions: 
\begin{enumerate}[label=(\roman*)]
\item $f$ is analytic in the upper-half $\mathbb{H}$ of the complex plane,
\item $f$ satisfies the equation 
\begin{equation*}
f\left(\frac{az+b}{cz+d}\right)=(cz+d)^kf(z)
\end{equation*} 
whenever $\begin{pmatrix}
a& b\\
c& d
\end{pmatrix}\in\Gamma$ and $z\in\mathbb{H}$, and
\item the Fourier expansion of $f$ has the form 
\begin{equation*}
f(z)= \sum_{n=0}^{\infty} c(n)e^{2\pi i n z}
\end{equation*}
at the cusp $i\infty$, and $f$ has analogous Fourier expansions at all other cusps.
\end{enumerate}
\end{definition}

In particular, we study \textit{Dedekind's eta-function}, a weight 1/2 modular form defined as the infinite product
\begin{equation*}
\eta(z):=q^{1/24}\ds\prod_{n=1}^{\infty}(1-q^n),
\end{equation*}
where $q:=e^{2\pi iz}$ and $z\in\mathbb{H}$.  An \textit{eta-quotient} is a function $f(z)$ of the form
\begin{equation*}
f(z):=\ds\prod_{\delta\mid N}\eta(\delta z)^{r_{\delta}},
\end{equation*}
where $N\geq 1$ and each $r_{\delta}$ is an integer.  If each $r_{\delta}\geq 0$, then $f(z)$ is known as an \textit{eta-product}.

If $N$ is a positive integer, then we define $\Gamma_0(N)$ as the congruence subgroup
\begin{equation*}
\Gamma_0(N):=\left\{\begin{pmatrix}
a& b\\
c& d
\end{pmatrix}\in SL_2(\Z):c\equiv 0\pmod N\right\}.
\end{equation*}
We recall Theorem 1.64 from~\cite[p. 18]{ono} regarding the modularity of eta-quotients:
\begin{prop}\label{mod}
If $f(z)=\prod_{\delta\mid N}\eta(\delta z)^{r_{\delta}}$ has integer weight $k=\frac{1}{2}\sum_{\delta\mid N}r_{\delta}$, with the additional properties that
\begin{equation*}
\sum_{\delta\mid N}\delta r_{\delta}\equiv 0\pmod{24}
\end{equation*}
and
\begin{equation*}
\sum_{\delta\mid N}\frac{N}{\delta}r_{\delta}\equiv 0\pmod{24},
\end{equation*}
then $f(z)$ satisfies
\begin{equation}\label{func}
f\left(\frac{az+b}{cz+d}\right)=\chi(d)(cz+d)^kf(z)
\end{equation}
for every $\begin{pmatrix}
a& b\\
c& d
\end{pmatrix}\in\Gamma_0(N)$ where the character $\chi$ is defined by $\chi(d):=\left(\frac{(-1)^ks}{d}\right)$, where $s:=\prod_{\delta\mid N}\delta^{r_\delta}$.
\end{prop}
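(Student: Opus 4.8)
The plan is to derive the transformation law \eqref{func} directly from Dedekind's functional equation for $\eta$ itself, treating the eta-quotient one factor at a time. Recall that for every $\gamma=\begin{pmatrix} a & b \\ c & d\end{pmatrix}\in\SL_2(\Z)$ with $c>0$ one has
\[
\eta(\gamma z)=\varepsilon(\gamma)\,\bigl(-i(cz+d)\bigr)^{1/2}\eta(z),
\qquad
\varepsilon(\gamma)=\exp\!\left(\pi i\left(\frac{a+d}{12c}-s(d,c)\right)\right),
\]
where $\varepsilon(\gamma)$ is a $24$th root of unity and $s(d,c)$ is the Dedekind sum. First I would record the effect of translation: since $\eta(z+1)=e^{\pi i/12}\eta(z)$, we get $\eta(\delta(z+1))=e^{\pi i\delta/12}\eta(\delta z)$, whence
\[
f(z+1)=\exp\!\Bigl(\tfrac{\pi i}{12}\sum_{\delta\mid N}\delta\, r_\delta\Bigr)f(z).
\]
The hypothesis $\sum_{\delta\mid N}\delta r_\delta\equiv 0\pmod{24}$ is exactly what forces this prefactor to equal $1$, giving invariance under $z\mapsto z+1$ and handling all elements of $\Gamma_0(N)$ with $c=0$.

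The key reduction is that the dilation $z\mapsto\delta z$ intertwines the $\Gamma_0(N)$-action with an honest $\SL_2(\Z)$-action. A direct computation shows that for $\delta\mid N$ and $\gamma=\begin{pmatrix} a & b \\ c & d\end{pmatrix}\in\Gamma_0(N)$,
\[
\delta\cdot\gamma z=\gamma_\delta\cdot(\delta z),
\qquad
\gamma_\delta:=\begin{pmatrix} a & b\delta \\ c/\delta & d\end{pmatrix}.
\]
Because $N\mid c$ and $\delta\mid N$ we have $c/\delta\in\Z$, while $\det\gamma_\delta=ad-bc=1$, so $\gamma_\delta\in\SL_2(\Z)$. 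Applying Dedekind's law to each $\eta(\delta\,\gamma z)=\eta(\gamma_\delta(\delta z))$, and noting that the automorphy factor is $\bigl((c/\delta)(\delta z)+d\bigr)^{1/2}=(cz+d)^{1/2}$ \emph{independently} of $\delta$, I would then multiply over $\delta$ with exponents $r_\delta$. The powers of $(cz+d)^{1/2}$ accumulate to $(cz+d)^{\frac12\sum_\delta r_\delta}=(cz+d)^{k}$, producing exactly the weight-$k$ factor of \eqref{func}, while the residual $(-i)^{\frac12\sum_\delta r_\delta}=(-i)^k$ (and the branch ambiguity of the square roots) is absorbed into the character.

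What remains is to show that the product of eta-multipliers $\prod_{\delta\mid N}\varepsilon(\gamma_\delta)^{r_\delta}$ collapses to the quadratic character $\chi(d)=\left(\frac{(-1)^k s}{d}\right)$ with $s=\prod_{\delta\mid N}\delta^{r_\delta}$, and this is the main obstacle and the genuinely computational heart of the argument. Here I would expand each $\varepsilon(\gamma_\delta)$, whose phase is governed by $\frac{a+d}{12\,(c/\delta)}$ and $s(d,c/\delta)$, and apply the reciprocity and transformation laws for Dedekind sums to convert the sum of these fractional phases into an integer multiple of $\tfrac{1}{12}$ plus an explicit quadratic term. The second congruence hypothesis $\sum_{\delta\mid N}\frac{N}{\delta}r_\delta\equiv 0\pmod{24}$ is precisely the condition that clears the $\delta$-dependent denominators and annihilates the non-quadratic part of the accumulated root of unity, after which the remaining contribution is exactly the Kronecker symbol $\left(\frac{(-1)^k s}{d}\right)$; quadratic reciprocity identifies this with $\chi(d)$. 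After handling the $c=0$ elements via the translation computation above and replacing $\gamma$ by $-\gamma$ where necessary to arrange $c>0$, this covers every $\gamma\in\Gamma_0(N)$ and establishes \eqref{func}. I expect the Dedekind-sum bookkeeping in this last step—matching the two congruence conditions to the two pieces of the multiplier—to be the only delicate part, the rest being formal consequences of the reduction $\delta\,\gamma z=\gamma_\delta(\delta z)$.
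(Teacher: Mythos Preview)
The paper does not prove this proposition at all: it is quoted verbatim as Theorem~1.64 from Ono's reference text and used as a black box, so there is no ``paper's own proof'' to compare against. Your outline is the standard route to this result (going back to Newman and Ligozat) and is correct in its architecture: the reduction $\delta\cdot\gamma z=\gamma_\delta(\delta z)$ with $\gamma_\delta=\begin{pmatrix} a & b\delta\\ c/\delta & d\end{pmatrix}\in\SL_2(\Z)$ is exactly right, the weight-$k$ automorphy factor does assemble as you describe, and the first congruence condition does govern the translation $z\mapsto z+1$.

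Your honest flag on the multiplier computation is well placed: that step is the entire content of the proposition. In practice one does not unwind the Dedekind sums $s(d,c/\delta)$ directly but instead uses Petersson's explicit formula for the eta-multiplier in terms of Kronecker symbols (for odd $c$, $\varepsilon(\gamma)=\left(\frac{d}{c}\right)\exp\bigl(\frac{\pi i}{12}((a+d)c-bd(c^2-1)-3c)\bigr)$, with a companion formula for even $c$), multiplies these over $\delta$ with exponents $r_\delta$, and then checks that the two congruence hypotheses kill the exponential phase while the accumulated Kronecker symbols combine via multiplicativity and quadratic reciprocity into $\left(\frac{(-1)^k s}{d}\right)$. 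Your sketch gestures at this but would need that explicit multiplier formula to be carried out; the raw Dedekind-sum reciprocity law alone is awkward here because the moduli $c/\delta$ vary with $\delta$. If you intend to supply a full proof rather than cite the literature, that is the lemma to insert.
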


Any modular form that is holomorphic (resp. vanishes) at all cusps of $\Gamma_0(N)$ and satisfies~(\ref{func}) is said to have Nebentypus character $\chi$, and the space of such forms is denoted $M_k(\Gamma_0(N),\chi)$ (resp. $S_k(\Gamma_0(N),\chi)$). In particular, if $k$ is a positive integer and $f(z)$ satisfies the conditions of Proposition~\ref{mod} and is holomorphic (resp. vanishes) at all of the cusps of $\Gamma_0(N)$, then $f(z)\in M_k(\Gamma_0(N),\chi)$ (resp. $S_k(\Gamma_0(N),\chi)$).  If $f(z)$ satisfies the conditions of Proposition~\ref{mod} but has poles at the cusps of $\Gamma_0(N)$, then we say $f(z)$ is a \textit{weakly holomorphic modular form}, and the space of such forms is denoted $M_k^{!}(\Gamma_0(N),\chi)$.

If $f(z)$ is a modular form, then we can act on it with various types of operators.  If $\gamma=\begin{pmatrix}
a & b\\
c & d
\end{pmatrix}
\in\SL_2(\Z)$, then the action of the \textit{slash operator} $\mid_{k}$ on $f(z)$ is defined by
\begin{equation*}
(f\mid_{k}\gamma)(z):=(cz+d)^{-k}f(\gamma z),
\end{equation*}
where
\begin{equation*}
\gamma z:=\frac{az+b}{cz+d}.
\end{equation*}

Next, we define
\begin{equation}\label{sigma}
\sigma_{v,t}:=\begin{pmatrix}
1 & v\\
0 & t
\end{pmatrix}.
\end{equation}
If $f(z)=\sum_{n=n_0}^{\infty}a(n)q^n$ is a weight $k$ modular form, then the action of the \textit{U-operator} $U(d)$ on $f(z)$ is defined by
\begin{equation*}\label{U}
f(z)\mid U(d):=t^{\frac{k}{2}-1}\sum_{v=0}^{t-1}f(z)\mid_{k}\sigma_{v,t}=\sum_{n=n_0}^{\infty}a(dn)q^n.
\end{equation*}
Likewise, the action of the \textit{V-operator} $V(d)$ is defined by
\begin{equation*}
f(z)\mid V(d):=t^{-\frac{k}{2}}f(z)\mid_{k}\begin{pmatrix}
t & 0\\
0 & 1
\end{pmatrix}
=
\sum_{n=n_0}^{\infty}a(n)q^{dn}.
\end{equation*}
Now, if $f(z)=\sum_{n=0}^{\infty}a(n)q^n\in M_k(\Gamma_0(N),\chi)$, then the action of the \textit{Hecke operator} $T_{p,k,\chi}$ on $f(z)$ is defined by
\begin{equation*}
f(z)\mid T_{p,k,\chi}:=\ds\sum_{n=0}^{\infty}(a(pn)+\chi(p)p^{k-1}a(n/p))q^n,
\end{equation*}
where $a(n/p)=0$ if $p\nmid n$.  We recall the following result from~\cite[p. 21,28]{ono}:
\begin{prop}\label{hecke}
Suppose that $f(z)\in M_k(\Gamma_0(N),\chi)$.
\begin{enumerate}
\item If $d\mid N$, then
\begin{equation*}
f(z)\mid U(d)\in M_k(\Gamma_0(N),\chi).
\end{equation*}
\item If $d$ is a positive integer, then
\begin{equation*}
f(z)\mid V(d)\in M_k(\Gamma_0(Nd),\chi).
\end{equation*}
\item If $p\geq 2$, then
\begin{equation*}
f(z)\mid T_{p,k,\chi}\in M_k(\Gamma_0(N),\chi).
\end{equation*}
\end{enumerate}
\end{prop}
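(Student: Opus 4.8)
The plan is to treat all three parts with a single template: write each operator as a finite average of slash operators $f\mid_k M$ over matrices $M\in\operatorname{GL}_2^+(\mathbb{Q})$ of determinant $d$ (or $p$), show that right multiplication by a generic $\gamma\in\Gamma_0(N)$ (resp.\ $\Gamma_0(Nd)$) permutes these $M$ up to left multiplication by elements of the relevant group carrying the correct value of $\chi$, read off the transformation law, and finally check holomorphy at every cusp. Throughout write $\gamma=\begin{pmatrix}a&b\\c&d_0\end{pmatrix}$ for a generic element of the acting group, so that $c\equiv0\pmod N$ (or $\pmod{Nd}$) and $ad_0-bc=1$. Part (2) is the model case: $f\mid V(d)$ is the function $z\mapsto f(dz)$, with $q$-expansion $\sum a(n)q^{dn}$, and for $\gamma\in\Gamma_0(Nd)$ the conjugate $\begin{pmatrix}d&0\\0&1\end{pmatrix}\gamma\begin{pmatrix}d&0\\0&1\end{pmatrix}^{-1}=\begin{pmatrix}a&db\\c/d&d_0\end{pmatrix}$ lies in $\Gamma_0(N)$ exactly because $Nd\mid c$, and it has the same diagonal as $\gamma$; feeding this into the transformation law of $f$ gives $(f\mid V(d))\mid_k\gamma=\chi(d_0)(f\mid V(d))$, with holomorphy at cusps inherited from $f$.

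For part (1) the crux is the coset computation: for $d\mid N$ and each $v\in\{0,\dots,d-1\}$ one has $\sigma_{v,d}\,\gamma=\gamma_v\,\sigma_{v',d}$ with $\gamma_v=\begin{pmatrix}a+vc&*\\dc&d_0-cv'\end{pmatrix}\in\Gamma_0(N)$, where $v'\equiv a^{-1}(b+d_0v)\pmod d$ is forced by requiring the upper-right entry to be an integer. Here $d\mid N\mid c$ gives $ad_0\equiv1\pmod d$, so $a$ and $d_0$ are units mod $d$; hence $v\mapsto v'$ is a bijection of $\mathbb{Z}/d\mathbb{Z}$, while $d_0-cv'\equiv d_0\pmod N$ keeps $\chi$ unchanged. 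Since $f\mid_k\gamma_v=\chi(d_0)f$, the average defining $U(d)$ transforms as $(f\mid U(d))\mid_k\gamma=\chi(d_0)\,d^{k/2-1}\sum_{v}f\mid_k\sigma_{v',d}=\chi(d_0)(f\mid U(d))$ after reindexing the permuted sum, and holomorphy at the cusps again follows because $f\mid U(d)$ is a finite sum of slash transforms of $f$.

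Part (3) I would reduce to the first two via the $q$-expansion identity $f\mid T_{p,k,\chi}=f\mid U(p)+\chi(p)p^{k-1}(f\mid V(p))$, which is exactly the stated coefficient formula. If $p\mid N$ then $\chi(p)=0$ and $T_{p,k,\chi}=U(p)$, so part (1) applies directly. If $p\nmid N$, then neither $f\mid U(p)$ nor $f\mid V(p)$ individually preserves $M_k(\Gamma_0(N),\chi)$, so I would instead realize $T_{p,k,\chi}$ through the double coset $\Gamma_0(N)\operatorname{diag}(1,p)\Gamma_0(N)$, whose right cosets are represented by $\sigma_{0,p},\dots,\sigma_{p-1,p}$ together with $\operatorname{diag}(p,1)$, with $\chi$ built into the weighting of the representatives; the same permutation-of-cosets argument then yields the transformation law, the weight on $\operatorname{diag}(p,1)$ reproducing the $\chi(p)p^{k-1}$ term. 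I expect this last case to be the main obstacle, since unlike in parts (1) and (2) no single summand is invariant, and one must verify both that right multiplication by $\gamma$ permutes all $p+1$ cosets and that the character weights assemble into precisely $\chi(p)p^{k-1}$; checking holomorphy at the cusps is routine throughout, each operator being a finite sum of slash operators applied to a form already holomorphic at every cusp.
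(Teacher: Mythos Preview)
Your outline is a correct sketch of the standard proof of these facts, but you should note that the paper does not actually prove this proposition: it is recalled verbatim from Ono's book with the citation ``\cite[p.~21,~28]{ono}'' and no argument is given. So there is nothing in the paper to compare your proof against beyond the bare statement.

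That said, your approach is exactly the textbook one (as in Ono, Koblitz, or Diamond--Shurman): realize each operator as a finite sum of slash operators over a set of coset representatives, check that right multiplication by $\gamma$ in the relevant congruence subgroup permutes these representatives up to left multiplication by matrices in $\Gamma_0(N)$ with the same lower-right entry modulo $N$, and deduce the transformation law. Your coset computation in part~(1) is correct (the bijection $v\mapsto v'\equiv a^{-1}(b+d_0v)\pmod d$ relies precisely on $d\mid N\mid c$), and your treatment of part~(3) via the case split $p\mid N$ versus the double coset $\Gamma_0(N)\operatorname{diag}(1,p)\Gamma_0(N)$ for $p\nmid N$ is the standard route. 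The only place you are a bit vague is in how the character weighting on the extra coset representative $\operatorname{diag}(p,1)$ produces the factor $\chi(p)p^{k-1}$; in practice one fixes a representative $\begin{pmatrix}p & 0\\ Nr & 1\end{pmatrix}$ with $r$ chosen so that this matrix times an element of $\Gamma_0(N)$ equals $\operatorname{diag}(p,1)$, and tracks the character through that identity. But this is a routine bookkeeping point, not a gap.
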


\subsection{Ramanujan's Congruences and Atkin's Generalizations}\label{atkinsthm}

Ramanujan conjectured the following congruences for the partition function modulo powers of the primes 5 and 7, which Watson proved in~\cite{watson}.

\begin{thm}[Ramanujan]\label{ram}
Let $\ell=5$, $7$, or $11$ and let $j\geq 1$.  Then if $24n\equiv 1\pmod{\ell^j}$, we have that
\begin{equation*}
\begin{cases}
p(n)\equiv 0\pmod{\ell^j}& \ell=5,11\\
p(n)\equiv 0\pmod{\ell^{\lfloor{j/2}\rfloor+1}}& \ell=7.
\end{cases}
\end{equation*}
\end{thm}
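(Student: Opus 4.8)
The plan is to follow Watson's argument \cite{watson}, recast in the eta-quotient and Hecke-operator language of Section~\ref{modularity}. The key observation is that
\[
\frac{1}{\eta(24z)}=q^{-1}\prod_{n=1}^{\infty}\frac{1}{1-q^{24n}}=\sum_{n=0}^{\infty}p(n)\,q^{24n-1}
\]
is a weakly holomorphic modular form of weight $-\tfrac12$ on $\Gamma_0(576)$, whose coefficient at $q^{24n-1}$ is exactly $p(n)$. Since $24n\equiv 1\pmod{\ell^j}$ is the same as $\ell^j\mid 24n-1$, the theorem asserts that the part of this $q$-expansion supported on exponents divisible by $\ell^j$ is highly divisible by $\ell$. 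This is precisely the kind of statement extracted by the $U(\ell)$-operator of Section~\ref{modularity}: applying $U(\ell)$ retains only the coefficients at exponents divisible by $\ell$, so $U(\ell)^{j}$ isolates those $p(n)$ with $\ell^j\mid 24n-1$.

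To bring this into the realm where Proposition~\ref{hecke} applies, I would first replace $1/\eta(24z)$ by a \emph{holomorphic} form agreeing with it modulo a controlled power of $\ell$. The device is the elementary congruence $\eta(z)^{\ell}\equiv\eta(\ell z)\pmod{\ell}$, and more generally $\eta(z)^{\ell^{m+1}}\equiv\eta(\ell z)^{\ell^{m}}\pmod{\ell^{m+1}}$, which lets one multiply $1/\eta(24z)$ by an eta-quotient supported on multiples of $\ell$ so as to cancel the pole at $i\infty$ while perturbing the $q$-expansion only modulo $\ell^{m}$. Proposition~\ref{mod} is then used to verify that the resulting eta-quotient is a genuine holomorphic form in some $S_k(\Gamma_0(N),\chi)$ with $\ell\mid N$. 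This reduces the problem to a statement inside a fixed finite-dimensional space of cusp forms, on which, by Proposition~\ref{hecke}(1), the operator $U(\ell)$ acts stably.

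The heart of the matter is then a divisibility recursion for the sequence of iterates $L_b$ obtained by applying $U(\ell)$ repeatedly to the form produced above. For $\ell=5$ the relevant space is one-dimensional, spanned by the eta-quotient behind Ramanujan's identity $\sum_{n\geq0}p(5n+4)q^n=5\prod_{n\geq1}(1-q^{5n})^{5}(1-q^{n})^{-6}$, so the recursion collapses and each application of $U(5)$ raises the $5$-adic valuation of every coefficient by exactly one; iterating $j$ times yields $p(n)\equiv0\pmod{5^{j}}$. The same full gain of one power per step persists for $\ell=11$ through a somewhat more involved recursion, giving $p(n)\equiv0\pmod{11^{j}}$.

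The main obstacle is the case $\ell=7$, which accounts for the weaker exponent $\lfloor j/2\rfloor+1$. Here the governing space is two-dimensional, so $U(7)$ no longer acts by a scalar but via a two-term recursion of the shape $L_{b+1}=\alpha_b L_b+7\,\beta_b L_{b-1}$ with $7$-integral $\alpha_b,\beta_b$; a careful $7$-adic analysis of this recursion shows that the guaranteed valuation of the coefficients increases only once every two steps, producing $7^{\lfloor j/2\rfloor+1}$ rather than $7^{j}$. Making this precise—choosing an explicit eta-quotient basis via Proposition~\ref{mod}, computing the matrix of $U(7)$ in that basis, controlling the behavior at the cusps other than $i\infty$, and tracking the exact valuations rather than merely that they grow—is the delicate core of the argument.
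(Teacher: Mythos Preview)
The paper does not prove Theorem~\ref{ram} at all: it is stated in Section~\ref{atkinsthm} as a classical result, attributed to Ramanujan and cited as having been proved by Watson in~\cite{watson}, and is then used as a black box in the proof of Theorem~\ref{cong1}. There is therefore no ``paper's own proof'' against which to compare your proposal.

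Your sketch is a reasonable modern outline of the Watson--Atkin approach via iterated $U(\ell)$-operators acting on a finite-dimensional space of modular forms, and the qualitative picture you describe for $\ell=5,7,11$ (full gain per step for $5$ and $11$, half-speed gain for $7$ due to a two-dimensional recursion) is correct in spirit. However, as written it remains a plan rather than a proof: you have not specified the actual eta-quotients that serve as a basis, computed the matrix of $U(\ell)$, or carried out the $\ell$-adic valuation analysis, and the $\ell=11$ case is considerably more intricate than ``the same full gain\ldots persists\ldots through a somewhat more involved recursion'' suggests. If the intent is merely to cite the result, as the paper does, then no proof is needed; if the intent is to supply one, substantial further work is required.
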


In~\cite{atkin}, Atkin generalized the Ramanujan congruences modulo powers of 5, 7, and 11 to the function $p_k(n)$, which counts the number of $k$-colored partitions of $n$.

\begin{thm}[Atkin]\label{atkin}
Let $k>0$, $\ell=2,3,5,7$ or $13$, and $j\geq 1$.   Then if $24n\equiv k\pmod{\ell^j}$, we have that
\begin{equation*}
p_k(n)\equiv 0\pmod{\ell^{\lfloor{\alpha j/2+\epsilon}\rfloor}},
\end{equation*}
where $\epsilon:=\epsilon(k)=O(\log k)$ and $\alpha=\alpha(k,\ell)$ depending on $\ell$ and the residue of $k$ modulo 24.
\end{thm}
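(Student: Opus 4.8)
The plan is to realize $p_k(n)$ as Fourier coefficients of a weakly holomorphic modular form and to track the $\ell$-adic valuation of those coefficients under iteration of the $U$-operator. Starting from $\prod_{n\geq 1}(1-q^n)^{-k}=\sum_{n\geq 0}p_k(n)q^n$ and recalling that $\eta(z)=q^{1/24}\prod_{n\geq 1}(1-q^n)$, I would first pass to
\[
g(z):=\eta(24z)^{-k}=\sum_{n\geq 0}p_k(n)\,q^{24n-k},
\]
whose exponents are exactly the integers $24n-k$. The hypothesis $24n\equiv k\pmod{\ell^j}$ is then precisely the condition that the exponent $24n-k$ be divisible by $\ell^{j}$, so the coefficients we must bound are those extracted by $g\mid U(\ell^{j})$, the $j$-fold iterate of the $U(\ell)$-operator of Section~\ref{modularity}. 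Since $g$ has negative (and, for odd $k$, half-integral) weight $-k/2$ with poles at the cusps, I would use Proposition~\ref{mod} to multiply by a suitable eta-product $\eta(\ell z)^{a}\eta(z)^{b}$ with $a,b\geq 0$, chosen so that the weight becomes a positive integer and the two divisibility conditions on $\sum_{\delta\mid N}\delta r_\delta$ and $\sum_{\delta\mid N}(N/\delta)r_\delta$ hold, thereby certifying $\ell$-integrality and landing inside a space $M_w(\Gamma_0(N),\chi)$ of genuine forms on which $U(\ell)$ acts by Proposition~\ref{hecke}(1). The pole of $g$ at the cusps has order growing linearly in $k$, and because each application of $U(\ell)$ contracts this order by roughly a factor of $\ell$, only the first $O(\log_\ell k)$ iterations are spent clearing it; the cumulative deficit these steps create, relative to the steady per-step gain, is the source of the additive constant $\epsilon=\epsilon(k)=O(\log k)$.

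The heart of the argument is a recursion governing the action of $U(\ell)$, and here the restriction $\ell\in\{2,3,5,7,13\}$ is essential: these are exactly the primes for which the modular curve $X_0(\ell)$ has genus zero, so that the ring of modular functions holomorphic away from the cusps is a polynomial ring in a single Hauptmodul $t_\ell$. I would normalize the forms $\Phi_j:=g\mid U(\ell^{j})$ by an appropriate eta-quotient to obtain weight-zero modular functions $\Psi_j$ on $\Gamma_0(\ell)$, hence polynomials in $t_\ell$ with $\ell$-integral coefficients, and then establish a \emph{modular equation} describing $U(\ell)$ on powers of $t_\ell$. The crucial quantitative input is that the coefficients of this modular equation are divisible by a prescribed power of $\ell$, so that applying $U(\ell)$ raises the $\ell$-adic valuation of the coefficient vector by a fixed amount every two iterations; the per-step gain is governed by an exponent $\alpha=\alpha(k,\ell)$ read off from the valuation of the leading term of the modular equation, which depends only on $\ell$ and on the residue $k\bmod 24$ (the latter fixing the character $\chi$ and the orders of vanishing at the cusps).

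With the modular equation in hand I would argue by induction on $j$: the base cases $j=1,2$ are handled directly by Watson's identities for $\Phi_1,\Phi_2$ in~\cite{watson}, and the recursion then shows that $\Phi_{j+2}$ equals $\ell^{\alpha}$ times an $\ell$-integral polynomial in $t_\ell$ plus lower-order corrections absorbed into the induction hypothesis. Iterating yields that every Fourier coefficient of $\Phi_j$ is divisible by $\ell^{\lfloor \alpha j/2+\epsilon\rfloor}$; since these coefficients are exactly the $p_k(n)$ with $24n\equiv k\pmod{\ell^{j}}$, the theorem follows.

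I expect the main obstacle to be the precise bookkeeping of the modular equation: proving that $U(\ell)$, written in the Hauptmodul coordinate $t_\ell$, genuinely gains the claimed power $\ell^{\alpha}$ at each stage, and identifying $\alpha$ and $\epsilon$ explicitly in terms of $k$ and $\ell$. This requires a careful analysis of the orders of vanishing of $g\mid U(\ell^{j})$ at \emph{every} cusp of $\Gamma_0(\ell)$ — to certify both holomorphy, so that the $\Psi_j$ really do lie in the polynomial ring generated by $t_\ell$, and the divisibility of the leading coefficient — rather than any single clean structural fact. The genus-zero hypothesis is exactly what renders this cusp-by-cusp bookkeeping finite and tractable.
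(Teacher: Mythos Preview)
The paper does not prove this statement at all: Theorem~\ref{atkin} is quoted verbatim as a result of Atkin from~\cite{atkin}, and the paper simply invokes it (together with Atkin's table of values of $\alpha$ and his recipe for computing $\epsilon$) to derive the special case~(\ref{57}) needed for Theorem~\ref{cong1}. There is nothing to compare your proposal against in this paper.

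That said, your sketch is a faithful high-level outline of Atkin's own argument in~\cite{atkin}: the reduction to coefficients of $\eta^{-k}$ picked out by $U(\ell^j)$, the crucial use of the genus-zero property of $X_0(\ell)$ for $\ell\in\{2,3,5,7,13\}$ to express everything as polynomials in a Hauptmodul, and the two-step recursion in $j$ driven by the $\ell$-adic valuations of the modular-equation coefficients are exactly the ingredients Atkin uses. Your identification of where $\alpha$ and $\epsilon$ come from (the per-step gain from the modular equation, and the $O(\log_\ell k)$ iterations spent clearing the initial pole) is also correct in spirit. The one place where your outline is slightly imprecise is the appeal to~\cite{watson} for the base cases: Watson treats only $k=1$, and Atkin has to redo the base-case computations for general $k$, which is part of why the constants depend on $k\bmod 24$. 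But since the paper under review supplies no proof here, this is a comment on Atkin's argument rather than on any discrepancy with the paper.
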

Atkin computes the value of $\alpha(k,\ell)$ in a table in~\cite{atkin}.  We note the following values of $\alpha$:
\begin{align}
\alpha(2,5)&=\alpha(2,7)=1.
\end{align}
In addition, following Atkin's method to calculate $\epsilon$ exactly, we observe
\begin{equation}
\epsilon=\begin{cases}
1-\lfloor{\log(48)/\log(\ell)}\rfloor=-1 & \ell=5\\
-\lfloor{\log(48)/\log(\ell)}\rfloor=-1 & \ell=7.
\end{cases}
\end{equation}
Therefore, for the case where $k=2$ and $\ell=5$ or 7, we have that for all $24n\equiv k\pmod{\ell^j}$,
\begin{equation}\label{57}
p_2(n)\equiv 0\pmod{\ell^{\lfloor{j/2-1}\rfloor}}.
\end{equation}

\subsection{Serre's Theorem}\label{serresthm}
We make use of the following theorem of Serre's regarding congruences for certain types of modular forms, stated in~\cite{treneer}:

\begin{thm}[Serre]\label{serre}
Suppose that $f(z)=\sum_{n=1}^{\infty}a(n)q^n\in S_k(\Gamma_0(N),\chi)$ has coefficients in the ring of integers $\mathcal{O}_K$ of a number field $K$ and $M$ is a positive integer.  Furthermore, suppose that $k\geq1$.  Then a positive proportion of the primes $p\equiv -1\pmod{MN}$ have the property that
\begin{equation*}
f(z)\mid T_{p,k,\chi}\equiv 0\pmod{M}.
\end{equation*}
\end{thm}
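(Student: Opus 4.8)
The plan is to derive the statement from the existence of the Galois representations attached to cusp forms together with the Chebotarev density theorem, the guiding idea being to force $\operatorname{Frob}_p$ to behave exactly like complex conjugation. The starting observation, which explains the hypothesis $p\equiv -1\pmod{MN}$, is a determinant computation. Since the nebentypus of a nonzero form of weight $k$ satisfies $\chi(-1)=(-1)^k$, any prime $p\equiv -1\pmod N$ has $\chi(p)=(-1)^k$, while $p\equiv -1\pmod M$ gives $p^{k-1}\equiv(-1)^{k-1}\pmod M$, so that
\begin{equation*}
\chi(p)\,p^{k-1}\equiv(-1)^k(-1)^{k-1}=-1\pmod M.
\end{equation*}
Thus on the Hecke side such primes are already distinguished; the real task is to make the remaining term $a(p)$ vanish modulo $M$ for a positive proportion of them.

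Next I would pass to a number field $K'\supseteq K$ large enough to contain all Hecke eigenvalues and decompose the finite-dimensional space $S_k(\Gamma_0(N),\chi)$ into eigenforms $g$ for the operators $T_{p,k,\chi}$ with $p\nmid N$. To each such eigenform and each prime $\mathfrak{m}$ of $\mathcal{O}_{K'}$ lying over a rational prime dividing $M$, the Eichler--Shimura--Deligne construction attaches a continuous representation $\rho_{g,\mathfrak{m}}\colon\operatorname{Gal}(\overline{\mathbb{Q}}/\mathbb{Q})\to\operatorname{GL}_2(\mathcal{O}_{K',\mathfrak{m}})$, unramified outside $NM$, with $\operatorname{tr}\rho_{g,\mathfrak{m}}(\operatorname{Frob}_p)=a_g(p)$ and determinant $\chi\cdot\chi_{\mathrm{cyc}}^{\,k-1}$. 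The decisive point is that, because every eigenform here shares the nebentypus $\chi$, the same computation as above yields $\det\rho_{g,\mathfrak{m}}(c)=(-1)^k(-1)^{k-1}=-1$ for complex conjugation $c$; since $c^2=1$ the matrix $\rho_{g,\mathfrak{m}}(c)$ has eigenvalues $+1,-1$, and hence $\operatorname{tr}\rho_{g,\mathfrak{m}}(c)=0$.

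I would then let $L$ be the compositum, over all eigenforms $g$ and all the finitely many primes $\mathfrak{m}$ above the divisors of $M$, of the fixed fields of $\ker\bar\rho_{g,\mathfrak{m}}$ (reduction modulo the $\mathfrak{m}$-power matching the valuation of $M$) together with the cyclotomic field $\mathbb{Q}(\zeta_{MN})$; this $L/\mathbb{Q}$ is a finite Galois extension. Complex conjugation determines a conjugacy class in $\operatorname{Gal}(L/\mathbb{Q})$ whose restriction to $\mathbb{Q}(\zeta_{MN})$ is $\zeta\mapsto\zeta^{-1}$, that is, the class of primes $p\equiv -1\pmod{MN}$. By Chebotarev, a positive proportion of all primes have $\operatorname{Frob}_p$ in this class; each such $p$ satisfies $p\equiv -1\pmod{MN}$ and, by the trace computation, $a_g(p)\equiv 0$ modulo every $\mathfrak{m}$ above $M$ to the required order, for all eigenforms $g$ at once, so $a_g(p)\equiv 0\pmod M$. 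Writing $f$ in the eigenbasis and using $g\mid T_{p,k,\chi}=a_g(p)\,g$ for $p\nmid N$ gives $f\mid T_{p,k,\chi}\equiv 0\pmod M$, and comparing with the density $1/\phi(MN)$ of the progression shows the proportion is positive within $\{p\equiv -1\pmod{MN}\}$.

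The single deep ingredient is Deligne's construction of the representations $\rho_{g,\mathfrak{m}}$, which I would cite rather than reprove; the main obstacle beyond it is careful bookkeeping. I must work over the enlarged field $K'$ to have an eigenbasis, then descend: the coefficients of $f\mid T_{p,k,\chi}$ lie in $\mathcal{O}_K$, and vanishing modulo every prime of $K'$ above $M$ (to the right order) forces vanishing modulo the rational integer $M$. I must also treat oldforms by passing to the newform of the appropriate level dividing $N$, which has the same $p$-th coefficient for $p\nmid N$, and, most importantly, ensure that the finitely many Frobenius conditions---one for each eigenform and each prime above $M$---are realized \emph{simultaneously} by the \emph{single} conjugacy class of complex conjugation in the compositum $L$, so that their common solution set is genuinely of positive density rather than possibly empty.
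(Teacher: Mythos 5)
The paper offers no proof of this statement to compare against: it is quoted (via Treneer) as a theorem of Serre, whose proof appears in Serre's paper \emph{Divisibilit\'e de certaines fonctions arithm\'etiques}. Measured against Serre's actual argument, your reconstruction is the standard route and is essentially correct: the $T_{p,k,\chi}$ with $p\nmid N$ are commuting normal operators, so an eigenbasis exists; each eigen-system comes from a newform of level dividing $N$ (your oldform remark); the attached $\mathfrak{m}$-adic representations have $\det\rho(c)=\chi(-1)(-1)^{k-1}=-1$, and since $\rho(c)$ has order two in a characteristic-zero local ring its eigenvalues are exactly $+1,-1$, so $\operatorname{tr}\rho(c)=0$ on the nose (which is why the argument survives even when $M$ is even); the cyclotomic field $\mathbb{Q}(\zeta_{MN})$ cuts out the progression $p\equiv -1\pmod{MN}$; and Chebotarev applied to the single class of complex conjugation in the compositum realizes all the finitely many conditions simultaneously, which you correctly identify as the crux.

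One step, as literally written, has a gap: the descent from the eigenbasis back to $f$. Writing $f=\sum_g c_g g$, the constants $c_g\in K'$ need not be $\mathfrak{m}$-integral, so knowing $a_g(p)\equiv 0$ modulo the power of $\mathfrak{m}$ ``matching the valuation of $M$'' does \emph{not} yet give $c_g a_g(p)\in M\mathcal{O}_{K'}$, and hence does not give $f\mid T_{p,k,\chi}\equiv 0\pmod M$. The standard patch is to choose a positive integer $D$ with $Dc_g\in\mathcal{O}_{K'}$ for all $g$ and to run your entire Chebotarev argument with modulus $MD$ in place of $M$ (Frobenius condition in the compositum containing $\mathbb{Q}(\zeta_{MDN})$); the resulting primes still satisfy $p\equiv -1\pmod{MN}$ and form a positive-density subset of that progression, and now $c_g a_g(p)\in M\mathcal{O}_{K'}$ for every $g$, after which your observation that the coefficients of $f\mid T_{p,k,\chi}$ lie in $\mathcal{O}_K$ and that $M\mathcal{O}_{K'}\cap\mathcal{O}_K=M\mathcal{O}_K$ finishes the descent. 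Two smaller citation points: for $k=1$ the representations come from Deligne--Serre rather than Eichler--Shimura--Deligne (the statement here allows $k\geq 1$), and the determinant computation $\chi(p)p^{k-1}\equiv -1\pmod M$ in your first paragraph, while true, is never actually used once you pass to eigenforms, since $g\mid T_{p,k,\chi}=a_g(p)g$ absorbs both Hecke terms.
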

Serre's theorem guarantees the existence of congruences for cusp forms with coefficients in the ring of integers of a number field, which we will use to prove properties of the coefficients of the conjugacy growth series for $(\Alt(\N),S')$ and $(\Sym(\N),S)$.

\section{Proof of Theorem~\ref{cong1}}\label{pf}

We first prove congruences for arbitrary powers of $\ell=5$ or 7.  Let $j\geq 1$ and suppose that $24n\equiv 1\pmod{\ell^j}$.  Then by Theorem~\ref{ram}, we have that
\begin{equation}\label{P1}
\gamma_{\Sym(\N),S}(n)=p(n)\equiv0\pmod{\ell^{\lfloor{j/2}\rfloor+1}}.
\end{equation}
Additionally, we have that $24(2n)\equiv 2\pmod{\ell^j}$.  Using the case of Theorem~\ref{atkin} where $k=2$ and $\ell=5$ or 7, as in~(\ref{57}), we have that
\begin{equation}\label{P2}
p_2(2n)\equiv0\pmod{\ell^{\lfloor{j/2-1}\rfloor}}.
\end{equation}  
Therefore, for all $24n\equiv 1\pmod{\ell^j}$, we obtain from~(\ref{calt2}),~(\ref{P1}), and~(\ref{P2}) that
\begin{equation}
\gamma_{\Alt(\N),S'}(2n)\equiv\gamma_{\Sym(\N),S}(n)\equiv0\pmod{\ell^{\lfloor{j/2-1}\rfloor}},
\end{equation}
as desired.
\qed

\section{Congruences for $p_{2}(n)$}\label{proof of thm}

Because of the relationship between the conjugacy growth series for $(\Alt(\N),S')$ and the function $p_{2}(n)$, here, we focus on congruences for $p_{2}(n)$.  By the definition of the 2-colored partition function $p_{2}(n)$, we have that
\begin{align}\label{p2}
\sum_{n=0}^{\infty}p_{2}(n)q^n&=\prod_{n=1}^{\infty}\frac{1}{(1-q^n)^2}=\frac{q^{\frac{1}{12}}}{\eta^2(z)}.
\end{align}
Throughout, we let
\begin{equation}\label{f}
f(z):=\frac{1}{\eta(12z)^2}=\sum_{n=-1}^{\infty} a(n)q^n.
\end{equation}
Then we have that $p_{2}\left(\frac{n+1}{12}\right)=a(n)$.  In order to prove congruences between the coefficients of the conjugacy growth series for $(\Alt(\N),S')$ and $(\Sym(\N),S)$, we first prove a theorem concerning the coefficients of $f(z)$.  This makes effective the following result of Treneer~\cite{treneer} by determining the exact value of $m$ that is sufficiently large.

\begin{prop}[Treneer]
Suppose that $\ell$ is an odd prime and that $k$ and $m$ are integers.  Let $N$ be a positive integer with $(N,p)=1$, and let $\chi$ be a Dirichlet character modulo $N$.  Let $K$ be an algebraic number field with ring of integers $\mathcal{O}_K$, and suppose $f(z)=a(n)q^n\in M^{!}_{k}(\Gamma_0(N),\chi)\cap\mathcal{O}_K((q))$.  If $m$ is sufficiently large, then for each positive integer $j$, a positive proportion of the primes $Q\equiv -1\pmod{N\ell^j}$ have the property that
\begin{equation*}
a(Q\ell^m n)\equiv 0\pmod{\ell^j}
\end{equation*}
for all $n$ coprime to $Q\ell$.
\end{prop}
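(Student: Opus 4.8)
The plan is to manufacture from $f$ an honest cusp form of positive integer weight to which Serre's theorem (Theorem~\ref{serre}) applies, and then to read the resulting Hecke congruence off at the level of Fourier coefficients. Three features of $f$ obstruct a direct appeal to Theorem~\ref{serre}: the weight $k$ need not be positive (indeed $k=-1$ for $f=1/\eta(12z)^2$), the form $f$ is only weakly holomorphic and so has poles at the cusps, and $f$ is not itself cuspidal. The argument removes these in turn.

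First I would fix the weight. There is a modular form $A_\ell$ of positive weight $w$ on $\SL_2(\Z)$ with $A_\ell\equiv 1\pmod{\ell}$ (for $\ell\geq 5$ one may take the Eisenstein series $E_{\ell-1}$), so that $A_\ell^{\ell^{j-1}}\equiv 1\pmod{\ell^j}$. Setting $g:=f\cdot A_\ell^{\,t\ell^{j-1}}$ with $t$ large enough that $k':=k+t\ell^{j-1}w$ is a positive integer, I obtain a weakly holomorphic form $g\in M^{!}_{k'}(\Gamma_0(N),\chi)$ whose Fourier coefficients agree with those of $f$ modulo $\ell^j$. Since the conclusion involves only coefficients modulo $\ell^j$, it suffices to prove the statement for $g$.

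Next I would kill the poles with the $U$-operator. Viewing $g$ in level $N\ell^2$ and noting that $U(\ell^m)=U(\ell)^m$ acts on $q$-expansions by $\sum b(n)q^n\mapsto\sum b(\ell^m n)q^n$, the weakly holomorphic analogue of Proposition~\ref{hecke}(1) keeps $g\mid U(\ell^m)$ inside $M^{!}_{k'}(\Gamma_0(N\ell^2),\chi)$ for every $m$. The principal part at $\infty$ disappears at once, since a term $b(-r)q^{-r}$ survives $U(\ell)$ only when $\ell\mid r$, so after finitely many steps no negative exponents remain. The substantive claim is that for $m$ sufficiently large $g\mid U(\ell^m)$ is congruent modulo $\ell^j$ to a cusp form $G\in S_{k'}(\Gamma_0(N\ell^2),\chi)$. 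To see this one computes, via the standard order formula for eta-quotients at a cusp $a/c$ of $\Gamma_0(N\ell^2)$ (with Proposition~\ref{mod} controlling the character), the order of vanishing of $g$ at each cusp, tracks how $U(\ell^m)$ rescales these orders, and shows that once $m$ exceeds an explicit bound every cusp expansion is holomorphic with constant term divisible by $\ell^j$; a weakly holomorphic form that is cuspidal modulo $\ell^j$ at all cusps is congruent modulo $\ell^j$ to an honest element of $S_{k'}(\Gamma_0(N\ell^2),\chi)$.

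Finally I would apply Serre's theorem and extract coefficients. Writing $G=\sum c(n)q^n$ with $c(n)\equiv b(\ell^m n)\equiv a(\ell^m n)\pmod{\ell^j}$, Theorem~\ref{serre} with $M=\ell^j$ and level $N\ell^2$ produces a positive proportion of primes $Q\equiv -1\pmod{N\ell^{j+2}}$, in particular $Q\equiv -1\pmod{N\ell^j}$, with $G\mid T_{Q,k',\chi}\equiv 0\pmod{\ell^j}$. The $n$-th coefficient of $G\mid T_{Q,k',\chi}$ is $c(Qn)+\chi(Q)Q^{k'-1}c(n/Q)$, and $c(n/Q)=0$ whenever $Q\nmid n$; hence $c(Qn)\equiv 0\pmod{\ell^j}$ for all $n$ coprime to $Q$. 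Translating back gives $a(\ell^m Qn)\equiv 0\pmod{\ell^j}$, and imposing $(n,\ell)=1$ as well (so that $\ell^m Qn$ has $\ell$-adic valuation exactly $m$) yields the asserted congruence $a(Q\ell^m n)\equiv 0\pmod{\ell^j}$ for all $n$ coprime to $Q\ell$. I expect the main obstacle to be the cuspidality claim of the third paragraph: the pole at $\infty$ is trivial to remove, but showing that a single power $U(\ell^m)$ simultaneously forces holomorphy and vanishing modulo $\ell^j$ at every remaining cusp demands uniform control of the eta-quotient's principal parts and of how the width and order at each cusp interact with the $U$-operator — and pinning down the smallest admissible $m$ for $f=1/\eta(12z)^2$ is exactly the effective refinement the remainder of the paper supplies.
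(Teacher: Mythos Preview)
Your overall architecture --- manufacture a cusp form congruent modulo $\ell^j$ to $\sum_{\ell\nmid n} a(\ell^m n)q^n$, apply Serre's Theorem~\ref{serre}, and read off the Hecke congruence --- matches the paper exactly, and your final paragraph is essentially the paper's proof of Theorem~\ref{cong}. The gap lies where you anticipate it, in the construction of the cusp form, but the mechanism you propose does not repair it.

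Two ingredients are missing. First, you raise the weight with a power of $E_{\ell-1}$, whereas the paper (following Treneer) uses the eta-quotient $F_\ell(z)=\eta(z)^{\ell^2}/\eta(\ell^2 z)$. Both satisfy a congruence $\equiv 1\pmod{\ell^j}$ after raising to the $\ell^{j-1}$-th power, but $F_\ell$ has the additional and essential feature that it \emph{vanishes} at every cusp $a/c$ of $\Gamma_0(N\ell^2)$ with $\ell^2\nmid c$; since $E_{\ell-1}$ is a unit at every cusp, it cannot play this role. Your claim that $U(\ell^m)$ ``rescales the orders'' at every cusp is the error: the expansion computation (Proposition~\ref{expans}) controls only cusps of the form $a/(c\ell^2)$, and at cusps with $\ell^2\nmid c$ the poles of $f$ survive $U(\ell^m)$ for every $m$. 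In the paper these poles are eliminated by choosing the exponent $\beta$ on $F_\ell$ large enough that its zeros dominate. Second, even at the cusps $a/(c\ell^2)$ where $U(\ell^m)$ does force holomorphy, it does \emph{not} drive the constant term to zero modulo $\ell^j$: Proposition~\ref{expans} shows that this constant term equals $a_0(0)$, \emph{independent of $m$}. The paper cancels it by working not with $f\mid U(\ell^m)$ but with the difference
\[
f_m:=f\mid U(\ell^m)-f\mid U(\ell^{m+1})\mid V(\ell),
\]
whose constant terms at these cusps agree and hence subtract away (Proposition~\ref{vanish}). At $\infty$ this subtraction simply discards the terms with $\ell\mid n$, which is harmless since the conclusion already restricts to $n$ coprime to $\ell$.
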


This section closely follows Section 3 in~\cite{treneer}.  Throughout this section, let $f(z)$ be defined by~(\ref{f}).  

\begin{thm}\label{cong}
Let $\ell\geq 5$ be prime and $j\in\N$.  Then for a positive proportion of primes $Q\equiv -1\pmod{144\ell^j}$, we have that
\begin{equation*}
a(Q\ell^{m_{\ell}} n)\equiv 0\pmod{\ell^j}
\end{equation*}
for all $n$ coprime to $Q\ell$.
\end{thm}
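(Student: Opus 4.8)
The plan is to realize $f(z)=1/\eta(12z)^2$ as a weakly holomorphic modular form, to strip its poles at the cusps away using the $U$-operator together with an auxiliary form congruent to $1$, to land in a space of genuine cusp forms modulo $\ell^j$, and then to invoke Serre's theorem (Theorem~\ref{serre}). By Proposition~\ref{mod} applied with $N=144$, $r_{12}=-2$, and all other $r_\delta=0$, the form $f$ lies in $M^{!}_{-1}(\Gamma_0(144),\chi)$: the two divisibility conditions force $144\mid N$, and the weight is $k=\tfrac12\sum_\delta r_\delta=-1$. Since $a(n)=p_2\!\left(\tfrac{n+1}{12}\right)$ vanishes unless $n\equiv-1\pmod{12}$, the only pole of $f$ at $\infty$ is the simple pole $q^{-1}$. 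The first observation is that this pole is automatically removed by $U(\ell^{m})$ for every $m\geq1$: the coefficient of $q^{-1}$ in $f\mid U(\ell^{m})$ is $a(-\ell^{m})=0$ because $-\ell^{m}<-1$, so $f\mid U(\ell^{m})=\sum_{n\geq0}a(\ell^{m}n)q^{n}$ is holomorphic, and in fact vanishes, at $\infty$.

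Next I would embed $f$ into $M^{!}_{-1}(\Gamma_0(144\ell),\chi)$ and apply $U(\ell)$ a total of $m_\ell$ times, which by the weakly holomorphic analogue of Proposition~\ref{hecke}(1) keeps us inside $M^{!}_{-1}(\Gamma_0(144\ell),\chi)$. The heart of the argument is a cusp-by-cusp analysis of $F:=f\mid U(\ell^{m_\ell})$. Using the order-of-vanishing formula for eta-quotients, one computes that on $\Gamma_0(144)$ the form $f$ has a pole of order exactly $1$ at every cusp; passing to level $144\ell$, the cusps $\tfrac ac$ with $\ell\nmid c$ acquire pole order $\ell$, while those with $\ell\mid c$ retain pole order $1$. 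I would then track how these orders transform under $U(\ell^{m})$ by writing each $\sigma_{v,\ell^{m}}\gamma$ in the form (an element of $\SL_2(\Z)$) times an upper-triangular matrix, for a set of cusp representatives $\gamma$. The effective content is that $F$ becomes holomorphic at every cusp $\tfrac ac$ with $\ell\mid c$ precisely once $\ell^{m}>24$; since $\ell\nmid24$, this is the condition $m\geq m_\ell$, giving $m_\ell=2$ for $5\leq\ell\leq23$ (where $\ell<24\leq\ell^{2}$) and $m_\ell=1$ for $\ell\geq29$ (where $\ell>24$). This comparison with $24$, inherited from the exponent $1/24$ in Dedekind's eta function and the level $144=24\cdot6$, is exactly where I make Treneer's ``$m$ sufficiently large'' explicit, and it is the main obstacle.

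I would then construct the auxiliary form. Following the standard construction built from the Eisenstein series $E_{\ell-1}$ (which satisfies $E_{\ell-1}\equiv1\pmod\ell$ by von Staudt--Clausen), there is a holomorphic modular form $E_{\ell,j}$ on $\Gamma_0(\ell)$ of weight $\ell^{j-1}(\ell-1)$ with $E_{\ell,j}\equiv1\pmod{\ell^{j}}$ whose divisor is supported on the cusps $\tfrac ac$ with $\ell\nmid c$, where it vanishes to an order tending to infinity with $j$. Since $F$ is holomorphic at every cusp with $\ell\mid c$ (by the previous paragraph, including $\infty$, where $E_{\ell,j}$ is nonvanishing) and $E_{\ell,j}$ kills the remaining poles at the cusps with $\ell\nmid c$ for $j$ large, the product
\[
g(z):=F(z)\,E_{\ell,j}(z)
\]
lies in $S_{k'}(\Gamma_0(144\ell),\psi)$ with $k'=\ell^{j-1}(\ell-1)-1\geq1$ and $\psi=\chi\chi_{E}$; it vanishes at $\infty$ because $F$ does. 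Moreover $E_{\ell,j}\equiv1\pmod{\ell^{j}}$ gives $g\equiv F=\sum_{n}a(\ell^{m_\ell}n)q^{n}\pmod{\ell^{j}}$, so for all $n$ the coefficient $b(n)$ of $g$ satisfies $b(n)\equiv a(\ell^{m_\ell}n)\pmod{\ell^{j}}$.

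Finally I would apply Serre's theorem (Theorem~\ref{serre}) to $g\in S_{k'}(\Gamma_0(144\ell),\psi)$ with modulus $\ell^{j}$: a positive proportion of primes $Q\equiv-1\pmod{144\ell\cdot\ell^{j}}$ satisfy $g\mid T_{Q,k',\psi}\equiv0\pmod{\ell^{j}}$. Such $Q$ are in particular $\equiv-1\pmod{144\ell^{j}}$ and constitute a positive proportion of the primes in that coarser progression. For such a $Q$, comparing the coefficient of $q^{n}$ on the two sides of $g\mid T_{Q,k',\psi}\equiv0$ and using $b(n)\equiv a(\ell^{m_\ell}n)\pmod{\ell^{j}}$ yields, for every $n$ with $\gcd(n,Q)=1$ (so that the term $b(n/Q)$ vanishes),
\[
a(\ell^{m_\ell}Qn)\equiv0\pmod{\ell^{j}}.
\]
Restricting to $n$ coprime to $Q\ell$ gives the claim. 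Beyond the effective cusp computation of the second paragraph, the only points to verify are that $U(\ell)$ preserves the weakly holomorphic space at level $144\ell$ and that $E_{\ell,j}$ has the stated weight and vanishing behaviour; the former is routine and the latter is classical.
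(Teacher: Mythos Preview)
Your overall strategy matches the paper's: both hit $f\mid U(\ell^{m_\ell})$ with an auxiliary form that is $\equiv 1\pmod{\ell^j}$ to land in a cusp-form space, then apply Serre's theorem. You also correctly identify the effective content as the comparison $\ell^{m_\ell}>24$ coming from the bound $n_0\geq -24$ in the cusp expansions of $1/\eta(12z)^2$. Two technical points, however, separate your sketch from a complete proof, and the paper handles both differently.

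First, you only establish that $F=f\mid U(\ell^{m_\ell})$ is \emph{holomorphic} at the cusps $a/c$ with $\ell\mid c$; you do not show that it \emph{vanishes} there (except at $\infty$, where $a(0)=0$). Since your $E_{\ell,j}$ is nonvanishing at exactly those cusps, the product $g=F\cdot E_{\ell,j}$ is a priori only in $M_{k'}$, not in $S_{k'}$, and Theorem~\ref{serre} as stated requires a cusp form. The paper closes this gap by working not with $F$ but with the twisted difference
\[
f_{m}(z):=f\mid U(\ell^{m})-f\mid U(\ell^{m+1})\mid V(\ell),
\]
whose constant term at each such cusp is $a_0(0)-a_0(0)=0$ (Proposition~\ref{vanish}); the subtraction also has the side effect of killing the coefficients with $\ell\mid n$, which is harmless for the conclusion.

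Second, the auxiliary form you describe as ``built from $E_{\ell-1}$'' does not have the claimed divisor: $E_{\ell-1}^{\ell^{j-1}}$ is a level-$1$ form and does not vanish at any cusp. What one actually needs is a form that is $\equiv 1\pmod{\ell^j}$ \emph{and} vanishes at the cusps where $F$ (or $f_{m_\ell}$) may have poles. The paper uses the eta-quotient
\[
F_\ell(z)=\frac{\eta(z)^{\ell^2}}{\eta(\ell^2 z)}\in M_{(\ell^2-1)/2}(\Gamma_0(\ell^2)),
\]
which satisfies $F_\ell^{\ell^{s-1}}\equiv 1\pmod{\ell^{s}}$ and vanishes at every cusp $a/c$ with $\ell^2\nmid c$; this is why the paper works at level $144\ell^2$ rather than $144\ell$. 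With these two corrections --- the $V$-subtraction and the eta-quotient in place of the Eisenstein series --- your argument becomes exactly the paper's.
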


The proof of Theorem~\ref{cong} requires the construction of a cusp form that preserves congruence properties of the function $f(z)$.

\begin{prop}\label{cusp}
For every positive integer $j$, there exists an integer $\beta\geq j-1$ and a cusp form
\begin{equation*}
g_{\ell,j}(z)\in S_{\kappa}(\Gamma_0(144\ell^2),\chi)\cap\Z((q)),
\end{equation*}
where $\kappa:=-1+\frac{\ell^{\beta}(\ell^2-1)}{2}$, with the property that
\begin{equation*}
g_{\ell,j}(z)\equiv\sum_{n=1}^{\infty}a(\ell^{m_\ell}n)q^n\pmod{\ell^j}.
\end{equation*}
\end{prop}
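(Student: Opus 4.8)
The plan is to realize the target $q$-series as the image of $f$ under a $U$-operator and then repair it into a cusp form by multiplying by an auxiliary eta-quotient that is congruent to $1$ modulo $\ell^j$ but vanishes to high order at every cusp other than $\infty$. Since $\ell\geq 5$ we have $\ell\nmid 144$, and since $m_\ell\leq 2$ I would first regard $f$ as an element of $M^{!}_{-1}(\Gamma_0(144\ell^2),\chi_0)$ (enlarging the level is harmless) and then apply $U(\ell^{m_\ell})$, which is legitimate because $\ell^{m_\ell}\mid 144\ell^2$. By Proposition~\ref{hecke} this gives
\[
f(z)\mid U(\ell^{m_\ell})=\sum_{n=1}^{\infty}a(\ell^{m_\ell}n)q^n\in M^{!}_{-1}(\Gamma_0(144\ell^2),\chi_0),
\]
where the expansion begins at $n=1$ because $a(n)=0$ for $n\leq 0$. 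In particular the $U$-operator does not select the $q^{-1}$ term of $f$ (as $-1$ is not a multiple of $\ell^{m_\ell}$), so it has cleared the pole at $\infty$, and the only remaining poles sit at cusps $\neq\infty$.

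For the multiplier I would take
\[
\Phi(z):=\frac{\eta(z)^{\ell^2}}{\eta(\ell^2 z)}.
\]
A direct check of the two divisibility conditions of Proposition~\ref{mod} (using $\ell^2\equiv 1\pmod{24}$ for $\ell\geq 5$, so that $\ell^4-1\equiv 0\pmod{24}$) shows that $\Phi$ is a holomorphic modular form of weight $(\ell^2-1)/2$ on $\Gamma_0(144\ell^2)$ with a quadratic Nebentypus. Expanding the product and using $(1-q^n)^{\ell}\equiv 1-q^{\ell n}\pmod{\ell}$ gives
\[
\Phi(z)=\frac{\prod_{n\geq 1}(1-q^n)^{\ell^2}}{\prod_{n\geq 1}(1-q^{\ell^2 n})}\equiv 1\pmod{\ell},
\]
so writing $\Phi=1+\ell h$ with $h\in\Z[[q]]$ and applying the binomial theorem yields $\Phi^{\ell^\beta}\equiv 1\pmod{\ell^{\beta+1}}$. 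Hence any $\beta\geq j-1$ forces $\Phi^{\ell^\beta}\equiv 1\pmod{\ell^j}$ as a $q$-series at $\infty$. A computation with the standard order formula for eta-quotients should show that $\Phi$ is holomorphic at every cusp of $\Gamma_0(144\ell^2)$, has order $0$ at $\infty$, and has strictly positive order at the cusps carrying the poles of $f\mid U(\ell^{m_\ell})$.

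I would then set
\[
g_{\ell,j}(z):=\bigl(f(z)\mid U(\ell^{m_\ell})\bigr)\cdot\Phi(z)^{\ell^\beta}.
\]
By Propositions~\ref{mod} and~\ref{hecke} this is a modular form of weight $\kappa=-1+\tfrac{\ell^\beta(\ell^2-1)}{2}$ on $\Gamma_0(144\ell^2)$ for the product character $\chi$, with integer coefficients. Choosing $\beta$ large enough that the order of vanishing of $\Phi^{\ell^\beta}$ exceeds the pole order of $f\mid U(\ell^{m_\ell})$ at each cusp other than $\infty$, the zeros of $\Phi^{\ell^\beta}$ cancel all of these poles, while at $\infty$ the product begins at $q^1$; thus $g_{\ell,j}$ vanishes at every cusp and lies in $S_\kappa(\Gamma_0(144\ell^2),\chi)\cap\Z((q))$. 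Finally, because $\Phi^{\ell^\beta}\equiv 1\pmod{\ell^j}$ as a $q$-series,
\[
g_{\ell,j}(z)\equiv f(z)\mid U(\ell^{m_\ell})=\sum_{n=1}^{\infty}a(\ell^{m_\ell}n)q^n\pmod{\ell^j},
\]
which is the assertion, with $\beta$ taken to be the larger of $j-1$ and the bound needed to clear the poles.

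The main obstacle I anticipate is the uniform cusp analysis, which is exactly where Treneer's ``sufficiently large $m$'' is made effective: one must compute the order of $\Phi$ at every cusp of $\Gamma_0(144\ell^2)$ and bound from above the pole orders of $f\mid U(\ell^{m_\ell})$ there, in order to name an explicit admissible $\beta$. The delicate point is to verify that poles of $f\mid U(\ell^{m_\ell})$ can occur only at cusps where $\Phi$ vanishes strictly, not merely to order $0$; at a bad cusp where $\Phi$ had order $0$ the multiplier would be powerless, and one would have to replace $\Phi$ by a larger auxiliary eta-quotient. Tracking the Nebentypus through the $U$-operator and through the odd power $\Phi^{\ell^\beta}$ of a quadratic character, so that it matches the character $\chi$ in the statement, is a further bookkeeping step to be carried out carefully.
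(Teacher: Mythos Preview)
Your construction is very close to the paper's---you correctly identify the auxiliary form $\Phi=F_\ell=\eta(z)^{\ell^2}/\eta(\ell^2 z)$, its congruence $\Phi^{\ell^\beta}\equiv 1\pmod{\ell^{\beta+1}}$, and the idea of multiplying by a large power to kill poles---but there is a genuine gap at the cusps where $\Phi$ has order~$0$, and the issue is \emph{vanishing}, not poles.

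Concretely: $\Phi$ has order~$0$ precisely at the cusps $a/c$ of $\Gamma_0(144\ell^2)$ with $\ell^2\mid c$. The carefully chosen exponent $m_\ell$ guarantees (via the expansion in Proposition~\ref{expans}, where $n_0\geq -24>-\ell^{m_\ell}$) that $f\mid U(\ell^{m_\ell})$ is \emph{holomorphic} at each such cusp, so your worry about uncancelled poles is in fact unfounded. But the constant term $a_0(0)$ of that expansion has no reason to be zero, so $f\mid U(\ell^{m_\ell})$ need not \emph{vanish} there. Since $\Phi^{\ell^\beta}$ contributes nothing at these cusps, your product $g_{\ell,j}$ lands in $M_\kappa(\Gamma_0(144\ell^2),\chi)$ but not in $S_\kappa$, and Serre's theorem (which drives the application) requires a cusp form. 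Replacing $\Phi$ by ``a larger auxiliary eta-quotient'' will not help: any eta-quotient on $\Gamma_0(\ell^2)$ that is $\equiv 1\pmod{\ell}$ at $\infty$ will again be nonvanishing at the cusps with $\ell^2\mid c$.

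The paper's remedy is not on the multiplier side but on the $U$-side: instead of $f\mid U(\ell^{m_\ell})$ one takes
\[
f_{m_\ell}(z):=f\mid U(\ell^{m_\ell})-\bigl(f\mid U(\ell^{m_\ell+1})\bigr)\mid V(\ell).
\]
Both terms have the \emph{same} constant $a_0(0)$ in their expansions at every cusp $a/(c\ell^2)$, so the difference genuinely vanishes there (Proposition~\ref{vanish}). One then sets $g_{\ell,j}=f_{m_\ell}\cdot F_\ell^{\ell^\beta}$ and chooses $\beta\geq j-1$ large enough to handle the remaining cusps with $\ell^2\nmid c$, exactly as you propose. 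A side effect worth noting is that the resulting congruence is
\[
g_{\ell,j}(z)\equiv\sum_{\substack{n\geq 1\\ \ell\nmid n}}a(\ell^{m_\ell}n)q^n\pmod{\ell^j},
\]
which is what is actually used downstream (only the coefficients with $\ell\nmid n$ matter in the proof of Theorem~\ref{cong}).
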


We first require the following proposition concerning the Fourier expansion of $f(z)$ at a given cusp after being acted on by the $U(\ell^m)$ operator for $m\geq 1$.

\begin{prop}\label{expans}
Let $\gamma:=\begin{pmatrix}
a & b\\
c\ell^2 & d
\end{pmatrix}
\in\SL_2(\Z)$ where $c\in\Z$ and $ac>0$.  Then there exists an integer $n_0\geq -24$ and a sequence $\{a_0(n)\}_{n\geq n_0}$ such that for each $m\geq 1$, we have that
\begin{equation*}
(f(z)\mid U_{\ell^m})\mid_{-1}\gamma=\sum_{\substack{n=n_0\\n\equiv 0\pmod{\ell^m}}}^{\infty}a_0(n)q_{24\ell^m}^{n},
\end{equation*}
where $q_{24\ell^m}:=e^{\frac{2\pi iz}{24\ell^m}}$.
\end{prop}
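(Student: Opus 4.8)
The plan is to pass to the weakly holomorphic modular form $f(z)=\eta(12z)^{-2}$ and analyze the slash action one cusp at a time. First I would check, via Proposition~\ref{mod} with $N=144$ and the single nonzero exponent $r_{12}=-2$, that $f\in M^{!}_{-1}(\Gamma_0(144),\chi)$; indeed the weight is $\tfrac12(-2)=-1$, and both congruence conditions hold since $12\cdot(-2)=-24$ and $(144/12)\cdot(-2)=-24$ are divisible by $24$. Writing the $U$-operator as an average of slash operators and using the cocycle property $f\mid_{-1}(M_1M_2)=(f\mid_{-1}M_1)\mid_{-1}M_2$, I would commute it past $\gamma$ to reduce the claim to an analysis of the matrices $\sigma_{v,\ell^m}\gamma$:
\begin{equation*}
(f\mid U(\ell^m))\mid_{-1}\gamma=\ell^{m(\frac{k}{2}-1)}\sum_{v=0}^{\ell^m-1}f\mid_{-1}(\sigma_{v,\ell^m}\gamma),\qquad k=-1.
\end{equation*}

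The heart of the argument is the factorization $\sigma_{v,\ell^m}\gamma=\gamma_v A_v$ with $\gamma_v\in\SL_2(\Z)$ and $A_v$ upper triangular of determinant $\ell^m$. Computing
\begin{equation*}
\sigma_{v,\ell^m}\gamma=\begin{pmatrix}a+vc\ell^2 & b+vd\\ c\ell^{m+2} & d\ell^m\end{pmatrix},
\end{equation*}
I would read off the diagonal part of $A_v$ from the first column. Since $\gamma\in\SL_2(\Z)$ has bottom row $(c\ell^2,d)$, we get $\gcd(a,c\ell^2)=1$; hence $\ell\nmid a$ and $\gcd(a,c)=1$, so that $a+vc\ell^2\equiv a\not\equiv0\pmod\ell$ and $\gcd(a+vc\ell^2,c)=\gcd(a,c)=1$. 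Therefore $\gcd(a+vc\ell^2,c\ell^{m+2})=1$ for every $v$, which forces the top-left entry of $A_v$ to equal $1$ and its bottom-right entry to equal $\ell^m$; that is, $A_v=\left(\begin{smallmatrix}1 & \beta_v\\ 0 & \ell^m\end{smallmatrix}\right)$ for an integer $\beta_v$ determined modulo $\ell^m$. Reducing the defining equation modulo $\ell^m$ gives $\beta_v\equiv(b+vd)(a+vc\ell^2)^{-1}$, and since $\det\left(\begin{smallmatrix}d & b\\ c\ell^2 & a\end{smallmatrix}\right)=1$ while the denominator $a+vc\ell^2$ is always a unit modulo $\ell^m$, the map $v\mapsto\beta_v$ is a fractional linear bijection of $\Z/\ell^m\Z$. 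Thus $\{\beta_v:0\le v<\ell^m\}$ is a complete residue system modulo $\ell^m$. This uniform computation, whose input is precisely the hypothesis that $\ell^2$ divides the lower-left entry of $\gamma$, is what simultaneously produces the $\ell^m$-divisibility of exponents and a pole order independent of $m$.

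With this factorization in hand I would write
\begin{equation*}
(f\mid U(\ell^m))\mid_{-1}\gamma=\ell^{m(\frac{k}{2}-1)}\sum_{v=0}^{\ell^m-1}(f\mid_{-1}\gamma_v)\mid_{-1}\begin{pmatrix}1 & \beta_v\\ 0 & \ell^m\end{pmatrix},
\end{equation*}
and expand each $f\mid_{-1}\gamma_v$ at the cusp $\gamma_v(\infty)=(a+vc\ell^2)/(c\ell^{m+2})$. Because $\ell$ is coprime to the level $144$, the order of $f=\eta(12z)^{-2}$ at every cusp of $\Gamma_0(144)$, computed by the standard eta-quotient order formula, is at least $-1$; accounting for the width (which produces the factor of $24$ visible in the variable $q_{24\ell^m}$) then gives the uniform lower bound $n_0\ge-24$, independent of both $v$ and $m$. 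The matrix $\left(\begin{smallmatrix}1 & \beta_v\\ 0 & \ell^m\end{smallmatrix}\right)$ acts by $z\mapsto(z+\beta_v)/\ell^m$, attaching to the $n$-th Fourier term the phase $e^{2\pi i n\beta_v/(24\ell^m)}$. Summing over $v$ and invoking the fact that $\beta_v$ sweeps out a complete residue system modulo $\ell^m$, orthogonality of additive characters annihilates every term whose exponent is not divisible by $\ell^m$; what survives is a series in $q_{24\ell^m}$ supported on $n\equiv0\pmod{\ell^m}$, and the resulting coefficients are seen to be independent of $m$ because the construction exhibits $(f\mid U(\ell^{m+1}))\mid_{-1}\gamma$ as the image of $(f\mid U(\ell^m))\mid_{-1}\gamma$ under one further application of $U(\ell)$ at the cusp. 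This yields the asserted expansion.

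The main obstacle is this final step: the cusps $\gamma_v(\infty)$ need not all be $\Gamma_0(144)$-equivalent, so the expansions $f\mid_{-1}\gamma_v$ genuinely vary with $v$, and one must check that the phases coming from the inter-cusp transformations assemble with the translations $\beta_v$ into a single complete character sum modulo $\ell^m$. Equivalently, this is the assertion that $U(\ell)$ commutes with passage to the cusp $\gamma$, and it is exactly here, and in the order computation, that the coprimality of $\ell$ with $144$ and the divisibility of the lower-left entry of $\gamma$ by $\ell^2$ are indispensable. The uniform pole bound $n_0\ge-24$, by contrast, follows routinely once the diagonal part of $A_v$ has been pinned down to be $\operatorname{diag}(1,\ell^m)$.
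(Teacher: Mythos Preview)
Your approach has the right overall shape, but the step you flag as ``the main obstacle'' is a genuine gap, not a routine verification, and your orthogonality argument is not correct as written. The single missing idea, which the paper takes from Treneer, is to insist that the shift $\beta_v$ be a multiple of the level $N=144$. Concretely, one sets $w_v:=s_vN$ with $s_vN\equiv(a+vc\ell^2)^{-1}(b+vd)\pmod{\ell^m}$, and then factors $\sigma_{v,\ell^m}\gamma=\alpha_v\sigma_{w_v,\ell^m}$. This choice resolves both of your difficulties at once. First, because $w_v\equiv0\pmod{N}$, one checks that $\alpha_v\alpha_0^{-1}\in\Gamma_0(144)$, so after accounting for the Nebentypus one has $f\mid_{-1}\alpha_v=f\mid_{-1}\alpha_0$ for every $v$; thus there is a \emph{single} expansion $f\mid_{-1}\alpha_0=\sum a_0(n)q_{24}^n$, and Lemma~\ref{honda} gives $n_0\ge-24$. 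Your $\gamma_v$ (with arbitrary $\beta_v$) do not lie in a common $\Gamma_0(144)$-coset, so the obstacle you name is real and cannot be waved away.

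Second, your character sum does not collapse. With an arbitrary $\beta_v$ the phase attached to the $n$-th term is $e^{2\pi i n\beta_v/(24\ell^m)}$, which depends on $\beta_v$ modulo $24\ell^m$, not modulo $\ell^m$; knowing only that $\{\beta_v\}$ is a complete residue system modulo $\ell^m$ is not enough for orthogonality. With the paper's choice $24\mid N\mid w_v$ the phase becomes $e^{2\pi i n(w_v/24)/\ell^m}$, and Treneer's Lemma~3.3 shows that $\{w_v/24\}$ runs through the residues modulo $\ell^m$, so the sum indeed vanishes unless $\ell^m\mid n$. In short, the bijection you prove for $v\mapsto\beta_v$ is correct but aimed at the wrong modulus; imposing $N\mid\beta_v$ is the key device you are missing.
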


The proof of this proposition makes use of the following lemma, which relies on the proof of Theorem 1 in~\cite{honda}.

\begin{lemma}\label{honda}
Given any matrix $A\in\SL_2(\Z)$, we have that
\begin{equation*}
f(z)\mid_{-1}A=\sum_{n=n_0}^{\infty}a_0(n)q_{24}^{n}
\end{equation*}
where $a_0(n)\in\Z$ and $n_0\geq -24$.
\end{lemma}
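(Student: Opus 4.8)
The plan is to reduce $f\mid_{-1}A$ to the classical functional equation of the Dedekind eta-function, which is essentially the content of the proof of Theorem~1 in~\cite{honda}. Since $f(z)=\eta(12z)^{-2}$ has weight $-1$ and $12\cdot Az=\left(\begin{smallmatrix}12&0\\0&1\end{smallmatrix}\right)Az$, I would first put the determinant-$12$ integer matrix $\left(\begin{smallmatrix}12&0\\0&1\end{smallmatrix}\right)A$ into Hermite normal form:
\[
\begin{pmatrix}12&0\\0&1\end{pmatrix}A=\gamma\begin{pmatrix}\alpha&\beta\\0&\delta\end{pmatrix},
\]
where $\gamma\in\SL_2(\Z)$, $\alpha\delta=12$, and $\alpha,\delta>0$. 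Writing $A=\left(\begin{smallmatrix}a&b\\c&d\end{smallmatrix}\right)$ and using $\gcd(a,c)=1$, the left invariant factor is $\alpha=\gcd(12a,c)=\gcd(12,c)$, which divides $12$. Comparing bottom rows of the factorization gives $c=c_\gamma\alpha$ and the key identity $c_\gamma\tfrac{\alpha z+\beta}{\delta}+d_\gamma=\tfrac{cz+d}{\delta}$, where $\gamma=\left(\begin{smallmatrix}a_\gamma&b_\gamma\\c_\gamma&d_\gamma\end{smallmatrix}\right)$.

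Next I would apply the eta transformation law $\eta(\gamma w)=\varepsilon(\gamma)(c_\gamma w+d_\gamma)^{1/2}\eta(w)$, with $w:=\tfrac{\alpha z+\beta}{\delta}$ and $\varepsilon(\gamma)$ the explicit $24$th root of unity given by a Dedekind sum. Substituting into $f\mid_{-1}A(z)=(cz+d)\,\eta(12Az)^{-2}$ and using the identity for $cz+d$ above, the automorphy factor $(c_\gamma w+d_\gamma)$ cancels against $(cz+d)$, leaving
\[
f\mid_{-1}A(z)=\delta\,\varepsilon(\gamma)^{-2}\,\eta\!\left(\tfrac{\alpha z+\beta}{\delta}\right)^{-2}.
\]
I would then expand the right-hand side through $\eta(w)^{-2}=e^{-\pi i w/6}\prod_{n\geq1}(1-e^{2\pi i nw})^{-2}$ together with the generating function $\prod_{n\geq1}(1-q^n)^{-2}=\sum_{m\geq0}p_2(m)q^m$. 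Because $e^{2\pi i w}$ equals a root of unity times a positive power of $q_{24}=e^{2\pi i z/24}$, this yields an expansion of $f\mid_{-1}A$ in powers of $q_{24}$.

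Two verifications then remain. For the order of the pole, the leading behavior comes from $e^{-\pi i w/6}$, whose $q_{24}$-exponent equals $-\alpha^2/6=-\gcd(12,c)^2/6$; since $\gcd(12,c)\leq12$, this is bounded below by $-24$, the bound being attained at the cusp $\infty$ (where $c=0$ and $\alpha=12$), so $n_0\geq-24$. For integrality, each coefficient is $p_2(m)\in\Z$ multiplied by a product of $\varepsilon(\gamma)^{-2}$ and the translation phase $e^{2\pi i m\beta/\delta}$, and one must check that these root-of-unity factors combine to yield rational integers $a_0(n)$. I expect this to be the main obstacle: it is precisely where the explicit Dedekind-sum evaluation of $\varepsilon(\gamma)$ from the proof of Theorem~1 in~\cite{honda} enters, since only a precise determination of the multiplier phase lets one conclude $a_0(n)\in\Z$ (and, replacing $A$ by $-A$ if necessary, one may always arrange the $c>0$ convention required by the eta law). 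Once the pole bound and the integrality of the coefficients are established, the claimed expansion follows.
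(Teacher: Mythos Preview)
Your approach is essentially the same as the paper's: both factor $\left(\begin{smallmatrix}12&0\\0&1\end{smallmatrix}\right)A=\gamma\left(\begin{smallmatrix}\alpha&\beta\\0&\delta\end{smallmatrix}\right)$ with $\gamma\in\SL_2(\Z)$, observe that $\alpha=\gcd(12,c)\le 12$, and then read off the pole order from the eta expansion at $w=(\alpha z+\beta)/\delta$ --- the paper writes $n_0=-2\alpha/\delta$, which coincides with your $-\alpha^2/6$ since $\alpha\delta=12$. The paper simply cites Theorem~1 of \cite{honda} for the resulting expansion and coefficients, whereas you carry out the eta transformation explicitly; your caution about the integrality of the $a_0(n)$ is appropriate (and indeed only the bound $n_0\ge -24$, not the integrality, is used in the subsequent Propositions).
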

\begin{proof}
Let $A:=
\begin{pmatrix}a & b\\ c& d\end{pmatrix}\in\SL_2(\Z)$.  Then, as in the proof of Theorem 1 in ~\cite{honda}, we can write
\begin{equation}
\begin{pmatrix}
12 & 0\\
0 & 1
\end{pmatrix}
\begin{pmatrix}
a & b\\
c& d
\end{pmatrix}
=
\begin{pmatrix}
a' & b'\\
c' & d'
\end{pmatrix}
\begin{pmatrix}
\alpha & \beta\\
0 & \delta
\end{pmatrix}
\end{equation}
where $\begin{pmatrix}
a' & b'\\
c' & d'
\end{pmatrix}\in\SL_2(\Z)$, $\alpha,\beta,\delta\in\Z$, and $\alpha,\delta>0$.  Then we have that $12a=a'\alpha$ and $c=c'\alpha$, so $\alpha=(a'\alpha, c'\alpha)=(12a,c)=(12,c)\leq 12$.  Again, by Theorem 1 in~\cite{honda}, we obtain
\begin{equation}
f(z)\mid_{-1}A=\sum_{n=n_0}^{\infty}a_0(n)q_{24}^{n}
\end{equation}
where $n_0:=\frac{-2\alpha}{\delta}>-2\alpha\geq -24$.
\end{proof}

\begin{proof}[Proof of Proposition~\ref{expans}]
As in~\cite{treneer}, for each $0\leq v\leq \ell^m-1$, choose an integer $s_v$ such that
\begin{equation}
s_vN\equiv (a+vc\ell^2)^{-1}(b+vd)\pmod{\ell^m}
\end{equation}
and define $w_v:=s_vN$.  We let 
\begin{equation}
\alpha_0:=\begin{pmatrix}
a & 0\\
c\ell^{m+2} & d-w_0c\ell^2
\end{pmatrix}.
\end{equation}
By~(\ref{U}), we have that
\begin{equation}\label{eq4}
(f(z)\mid U_{\ell^m})\mid_{-1}\gamma=(\ell^m)^{-\frac{3}{2}}\sum_{v=0}^{\ell^m-1}f(z)\mid_{-1}\sigma_{v,\ell^m}\gamma.
\end{equation}
We observe that $\sigma_{v,\ell^m}\gamma=\alpha_0\sigma_{w_v,\ell^m}$, so we have that
\begin{equation}
(f(z)\mid U_{\ell^m})\mid_{-1}\gamma=(\ell^m)^{-\frac{3}{2}}\sum_{v=0}^{\ell^m-1}f(z)\mid_{-1}\alpha_0\sigma_{w_v,\ell^m}.
\end{equation}
By Lemma~\ref{honda}, we have that 
\begin{equation*}
f(z)\mid_{-1}\alpha_0=\sum_{n=n_0}^{\infty}a_0(n)q_{24}^{n},
\end{equation*} 
so we obtain 
\begin{align}\label{eq3}
\sum_{v=0}^{\ell^m-1}f(z)\mid_{-1}\alpha_0\sigma_{w_v,\ell^m}&=\sum_{v=0}^{\ell^m-1}\ell^{\frac{m}{2}}\sum_{n=n_0}^{\infty}a_0(n)e^{\frac{2\pi in(z+w_v)}{24\ell^m}}\\
\nonumber&=\ell^{\frac{m}{2}}\sum_{n=n_0}^{\infty}a_0(n)q_{24\ell^m}^n\sum_{v=0}^{\ell^m-1}e^{\frac{2\pi inw_v}{24\ell^m}}.
\end{align}
By Lemma 3.3 in~\cite{treneer}, the numbers $\frac{w_v}{24}$ run through the residue classes modulo $\ell^m$ as $v$ does.  Therefore, we have that
\begin{equation}\label{eq2}
\sum_{v=0}^{\ell^m-1}e^{\frac{2\pi inw_v}{24\ell^m}}=\sum_{v=0}^{\ell^m-1}e^{\frac{2\pi inv}{\ell^m}}=\begin{cases}
\ell^m & n\equiv 0\pmod{\ell^m}\\
0 &\text{else.}
\end{cases}
\end{equation}
Combining~(\ref{eq3}) and~(\ref{eq2}), we have that
\begin{equation}\label{eq1}
\sum_{v=0}^{\ell^m-1}f(z)\mid_{-1}\alpha_0\sigma_{w_v,\ell^m}=\ell^{\frac{3}{2}}\sum_{\substack{n=n_0\\n\equiv 0\pmod{\ell^m}}}^{\infty}a_0(n)q_{24\ell^m}^{n}.
\end{equation}
Using~(\ref{eq4}) and~(\ref{eq1}), we obtain
\begin{equation}
(f(z)\mid U_{\ell^m})\mid_{-1}\gamma=\sum_{\substack{n=n_0\\n\equiv 0\pmod{\ell^m}}}^{\infty}a_0(n)q_{24\ell^m}^{n},
\end{equation}
the Fourier expansion of $f(z)\mid U_{\ell^m}$ at the cusp $\frac{a}{c\ell^2}$.
\end{proof}

We now construct a weakly holomorphic modular form which vanishes at certain cusps of $\Gamma_0(144\ell^2)$.

\begin{prop}\label{vanish}
For each nonnegative integer $m$, define
\begin{equation*}
f_m(z):=f(z)\mid U_{\ell^m}-f(z)\mid U_{\ell^m+1}\mid V_{\ell}\in M_{-1}^{!}(\Gamma_0(144\ell^2),\chi).
\end{equation*}
Then $f_{m_\ell}$ vanishes at each cusp $\frac{a}{c\ell^2}$ of $\Gamma_0(144\ell^2)$ with $ac>0$.
\end{prop}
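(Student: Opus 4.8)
The plan is to fix a cusp $\mathfrak a=\frac{a}{c\ell^2}$ of $\Gamma_0(144\ell^2)$ with $ac>0$, representing it by $\gamma=\begin{pmatrix}a&b\\ c\ell^2&d\end{pmatrix}\in\SL_2(\Z)$, and to compute the Fourier expansion of $f_{m_\ell}\mid_{-1}\gamma$ at $i\infty$. Since $f_{m_\ell}$ is a weakly holomorphic modular form on $\Gamma_0(144\ell^2)$, its order at $\mathfrak a$ is well defined, and it suffices to show that this expansion, once written in the parameter $q_{24}=e^{2\pi i z/24}$, contains no term of non-positive order. Because $f_{m_\ell}=f\mid U_{\ell^{m_\ell}}-f\mid U_{\ell^{m_\ell+1}}\mid V_\ell$, I would expand the two summands separately and then compare them term by term.

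For the first summand, Proposition~\ref{expans} with $m=m_\ell$ gives
$$(f\mid U_{\ell^{m_\ell}})\mid_{-1}\gamma=\sum_{\substack{n\geq n_0\\ \ell^{m_\ell}\mid n}}a_0(n)\,q_{24\ell^{m_\ell}}^{\,n},\qquad n_0\geq -24.$$
Writing $n=\ell^{m_\ell}j$ collapses this to a genuine $q_{24}$-expansion $\sum_j a_0(\ell^{m_\ell}j)q_{24}^{\,j}$. Here the definition of $m_\ell$ does the essential work: since $\ell^{m_\ell}\geq 25>24\geq -n_0$, the only index $n\leq 0$ that is divisible by $\ell^{m_\ell}$ and satisfies $n\geq n_0$ is $n=0$, so the first summand is holomorphic at $\mathfrak a$ with constant term $a_0(0)$. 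Tracing Proposition~\ref{expans} back through Lemma~\ref{honda}, this $a_0(0)$ is the coefficient of $q_{24}^{0}$ in $f\mid_{-1}\alpha$, where $\alpha\in\SL_2(\Z)$ has first column $\begin{pmatrix}a\\ c\ell^{m_\ell+2}\end{pmatrix}$.

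For the second summand I would first push $V_\ell$ through the slash operator. As $V_\ell$ is, up to the weight normalization, the matrix $\begin{pmatrix}\ell&0\\0&1\end{pmatrix}$, I factor
$$\begin{pmatrix}\ell&0\\0&1\end{pmatrix}\gamma=\begin{pmatrix}\ell a&\ell b\\ c\ell^2&d\end{pmatrix}=\gamma'\begin{pmatrix}\ell&\beta\\0&1\end{pmatrix},\qquad \gamma'=\begin{pmatrix}a&b'\\ c\ell&d'\end{pmatrix}\in\SL_2(\Z),$$
which is possible because $\gcd(a,c\ell)=1$ (a consequence of $\det\gamma=1$). Thus, up to weight-$(-1)$ scalar factors that cancel, $(f\mid U_{\ell^{m_\ell+1}}\mid V_\ell)\mid_{-1}\gamma$ is the expansion of $f\mid U_{\ell^{m_\ell+1}}$ at the cusp $\frac{a}{c\ell}$ followed by $z\mapsto\ell z+\beta$. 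Running the argument of Proposition~\ref{expans} verbatim at $\frac{a}{c\ell}$ (Lemma~\ref{honda} applies to any $\SL_2(\Z)$ matrix, and $\ell^{m_\ell+1}>24$ again removes every negative term) shows this expansion is holomorphic, supported on powers $q_{24}^{\,\ell k}$ with $k\geq 0$, and that its constant term $\phi(0)$ is the coefficient of $q_{24}^{0}$ in $f\mid_{-1}\alpha'$, where $\alpha'\in\SL_2(\Z)$ has first column $\begin{pmatrix}a\\ (c\ell)\ell^{m_\ell+1}\end{pmatrix}=\begin{pmatrix}a\\ c\ell^{m_\ell+2}\end{pmatrix}$.

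Combining the two computations, $f_{m_\ell}\mid_{-1}\gamma$ is holomorphic at $\mathfrak a$ with constant term $a_0(0)-\phi(0)$, so everything reduces to the single identity $a_0(0)=\phi(0)$ — the step I expect to be the main obstacle. The key observation is that both numbers are the $q_{24}^{0}$-coefficient of $f\mid_{-1}M$ for matrices $M\in\SL_2(\Z)$ sharing the first column $\begin{pmatrix}a\\ c\ell^{m_\ell+2}\end{pmatrix}$. Any two such matrices differ on the right by $\pm\begin{pmatrix}1&n\\0&1\end{pmatrix}$ with $n\in\Z$, since their second columns differ by an integer multiple of the common (primitive) first column once both are required to have determinant $1$; acting by $\begin{pmatrix}1&n\\0&1\end{pmatrix}$ only translates $z\mapsto z+n$, leaving the coefficient of $q_{24}^{\,0}$ unchanged. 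Hence $a_0(0)=\phi(0)$, the constant terms cancel, and $f_{m_\ell}\mid_{-1}\gamma$ involves only positive powers of $q_{24}$. Therefore $f_{m_\ell}$ vanishes at every cusp $\frac{a}{c\ell^2}$ with $ac>0$, as claimed.
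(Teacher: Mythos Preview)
Your argument is correct and follows the same overall strategy as the paper: apply Proposition~\ref{expans} to the first summand, use the inequality $\ell^{m_\ell}>24$ to eliminate negative terms, and then show that the constant term of the second summand matches $a_0(0)$ so that the two cancel. The one genuine difference is in how the second summand is handled. The paper simply invokes Proposition~3.5 of Treneer, which directly gives
\[
f_{m}(z)\mid_{-1}\gamma=\sum_{\substack{n\ge 0\\ \ell^{m}\mid n}}a_0(n)q_{24\ell^{m}}^{\,n}-\sum_{\substack{n\ge 0\\ \ell^{m+1}\mid n}}a_0(n)q_{24\ell^{m}}^{\,n},
\]
with the \emph{same} sequence $a_0(n)$ in both sums, so the cancellation of constant terms is immediate. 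You instead push $V_\ell$ through the slash operator, re-run the Proposition~\ref{expans} computation at the cusp $\tfrac{a}{c\ell}$, and then identify the two constant terms by observing that the matrices $\alpha$ and $\alpha'$ arising from Lemma~\ref{honda} share the same first column and hence differ by an upper-triangular unipotent on the right. This is a perfectly valid and more self-contained way of recovering exactly what Treneer's Proposition~3.5 provides; it buys independence from the external reference at the cost of a little extra bookkeeping. One small cosmetic point: when two $\SL_2(\Z)$ matrices share a first column, they necessarily differ by $\bigl(\begin{smallmatrix}1&n\\0&1\end{smallmatrix}\bigr)$ with no sign ambiguity, so the ``$\pm$'' in your description is unnecessary (and would otherwise introduce a spurious sign in weight~$-1$).
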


\begin{proof}
By Proposition~\ref{expans}, we have that
\begin{equation}
(f(z)\mid U_{\ell^{m_\ell}})\mid\gamma=\sum_{\substack{n= n_0\\n\equiv 0\pmod{\ell^{m_\ell}}}}^{\infty}a_0(n)q_{24\ell^{m_\ell}}^{n}
\end{equation}
where $n_0\geq-24$.  We now consider two cases.  If $5\leq\ell\leq 23$, we have that
\begin{equation*}
-\ell^{m_\ell}\leq -25<-24\leq n_0,
\end{equation*}
and if $\ell\geq 29$, we have that
\begin{equation*}
-\ell^{m_\ell}\leq -29<-24\leq n_0.
\end{equation*}
Suppose $a_0(n)\neq 0$.  Then $n\geq n_0>-\ell^{m_{\ell}}$, but $n\equiv 0\pmod{\ell^{m_\ell}}$, so $n\geq 0$.  Therefore, we obtain
\begin{equation}
(f(z)\mid U_{\ell^{m_\ell}})\mid\gamma=\sum_{\substack{n=0\\n\equiv 0\pmod{\ell^{m_{\ell}}}}}^{\infty}a_0(n)q_{24\ell^{m_\ell}}^{n}
\end{equation}
so $f(z)\mid U_{\ell^{m_{\ell}}}$ is holomorphic at the cusp $\frac{a}{c\ell^2}$.

Now, by Proposition 3.5 in~\cite{treneer}, we have that
\begin{equation}
f_m(z)\mid_{-1}\gamma=\sum_{\substack{n=0\\n\equiv 0\pmod{\ell^m}}}^{\infty}a_0(n)q_{24\ell^m}^n-\sum_{\substack{n=0\\n\equiv 0\pmod{\ell^{m+1}}}}^{\infty}a_0(n)q_{24\ell^m}^n,
\end{equation}
so the constant term in each expansion is $a_0(0)$, and they cancel.  Therefore, $f_{m_{\ell}}$ vanishes at the cusp $\frac{a}{c\ell^2}$.
\end{proof}

We are now ready to prove Proposition~\ref{cusp}.

\begin{proof}[Proof of Proposition~\ref{cusp}]
As in~\cite{treneer}, we define the eta-quotient
\begin{equation}
F_{\ell}(z):=\frac{\eta^{\ell^2}(z)}{\eta(\ell^2 z)}\in M_{\frac{\ell^2-1}{2}}(\Gamma_0(\ell^2)).
\end{equation}
By Theorem 1.65 in~\cite{ono}, we see that $F_{\ell}$ vanishes at every cusp $\frac{a}{c}$ of $\Gamma_0(144\ell^2)$ with $\ell^2\nmid c$.  We also have that $F_{\ell}(z)^{\ell^{s-1}}\equiv 1\pmod{\ell^s}$ for any integer $s\geq 1$.

Now, define
\begin{equation}
g_{\ell,j}(z):=f_{m_\ell}(z)\cdot F_{\ell}(z)^{\ell^\beta}
\end{equation}
where $\beta\geq j-1$ is sufficiently large such that $g_{\ell,j}(z)$ vanishes at all cusps $\frac{a}{c}$ of $\Gamma_0(144\ell^2)$ where $\ell^2\nmid c$.  By Theorem 1.65 in~\cite{ono}, it is possible to choose such a $\beta$ such that the order of vanishing of $g_{\ell,j}(z)$ is at least one at all such cusps.  Then $g_{\ell,j}\in\Z((q))$ and
\begin{equation}
g_{\ell,j}(z)\equiv f_{m_\ell}(z)\pmod{\ell^j}.
\end{equation}
By our choice of $\beta$, $g_{\ell,j}(z)$ vanishes at all cusps $\frac{a}{c}$ of $\Gamma_0(144\ell^2)$ where $\ell^2\nmid c$.
Furthermore, by Proposition~\ref{vanish}, $g_{\ell,j}(z)$ vanishes at all cusps $\frac{a}{c}$ where $\ell^2\mid c$.  
Define $\kappa:=-1+\frac{\ell^{\beta}(\ell^2-1)}{2}$.  Then we have that
\begin{equation}
g_{\ell,j}(z)\in S_{\kappa}(\Gamma_0(144\ell^2),\chi).
\end{equation}
By definition of $f_{m_{\ell}}$, we obtain
\begin{equation}
g_{\ell,j}(z)\equiv \sum_{n=1}^{\infty}a(\ell^{m_\ell}n)q^n-\sum_{n=1}^{\infty}a(\ell^{m_\ell+1}n)q^n\equiv\sum_{\substack{n=1\\\ell\nmid n}}^{\infty}a(\ell^{m_\ell}n)q^n\pmod{\ell^j}.
\end{equation}
Thus $g_{\ell,j}$ satisfies the conditions of Proposition~\ref{cusp}.
\end{proof}

Now that we have constructed the necessary cusp form, we arrive at the proof of Theorem~\ref{cong}.

\begin{proof}[Proof of Theorem~\ref{cong}]
By Proposition~\ref{cusp}, we can construct a cusp form $g_{\ell,j}\in S_{\kappa}(\Gamma_0(144\ell^2),\chi)\in\Z((q))$ such that
\begin{equation}
g_{\ell,j}(z)\equiv\sum_{\substack{n=1\\{\ell\nmid n}}}^{\infty}a(\ell^{m_\ell}n)q^n\pmod{\ell^j}.
\end{equation}
By Theorem~\ref{serre}, for a positive proportion of the primes $Q\equiv -1\pmod{144\ell^{j+2}}$, we have that
\begin{equation}
g_{\ell,j}(z)\mid T_{Q,\kappa,\chi}(Q)\equiv 0\pmod{\ell^{j}}.
\end{equation}
We can then write $g_{\ell,j}(z)=\sum_{n=1}^{\infty}b(n)q^n$ to obtain
\begin{equation}\label{gheck}
g_{\ell,j}(z)\mid T_{Q,\kappa,\chi}=\sum_{n=1}^{\infty}\left(b(Qn)+\chi(Q)Q^{\kappa-1}b(n/Q)\right)q^n\equiv0\pmod{\ell^j}.
\end{equation}
If $(Q,n)=1$, then the coefficient of $q^n$ in~(\ref{gheck}) is $b(Qn)$, so
\begin{equation}
a(Q\ell^{m_{\ell}}n)\equiv b(Qn)\equiv 0\pmod{\ell^j}
\end{equation}
for all $n$ coprime to $Q\ell$.
\end{proof}

\section{Proof of Theorem~\ref{sym}}\label{section4}
We now make use of Theorem~\ref{cong} to prove congruences between the coefficients of the conjugacy growth series for $(\Alt(\N),S')$ and $(\Sym(\N),S)$.
\begin{proof}[Proof of Theorem~\ref{sym}]
By~(\ref{calt2}), it is enough to show that $p_{2}(2Q\ell^{m_{\ell}}n+2\delta_\ell)\equiv 0\pmod{\ell^j}$.  By~(\ref{p2}) and~(\ref{f}), we observe $p_{2}\left(\frac{n+1}{12}\right)=a(n)$, so it suffices to prove the existence of congruences for $a(n)$.

By Theorem~\ref{cong}, for a positive proportion of primes $Q\equiv -1\pmod{144\ell^j}$, we have that
\begin{equation}\label{pa}
p_{2}\left(\frac{Q\ell^{m_{\ell}}n+1}{12}\right)=a(Q\ell^{m_{\ell}}n)\equiv 0\pmod{\ell^j}
\end{equation}
for all $n$ coprime to $Q\ell$.  Defining $\delta_{\ell}$ and $\beta_{\ell}$ by~(\ref{delta_l}) and~\ref{beta_l}), respectively, we can rewrite the left-hand side of equation~(\ref{pa}) as
\begin{equation}
p_{2}\left(2Q\ell^{m_{\ell}}n+2\delta_\ell\right)
\end{equation}
for all $24n+\beta_{\ell}$ coprime to $Q\ell$.  Therefore, for a positive proportion of primes $Q\equiv -1\pmod{144\ell^j}$, we have that
\begin{equation}
p_{2}\left(2Q\ell^{m_{\ell}}n+2\delta_\ell\right)\equiv 0\pmod{\ell^j},
\end{equation}
so we obtain
\begin{equation*}
2\gamma_{\Alt(\N),S'}(2Q\ell^{m_{\ell}}n+2\delta_\ell)\equiv\gamma_{\Sym(\N),S}(Q\ell^{m_{\ell}}n+\delta_\ell)\pmod{\ell^j},
\end{equation*}
as desired.
\end{proof}

\bibliography{references}
\end{document}